\newtheorem{theorem}{Theorem}[section]
\newtheorem{lemma}[theorem]{Lemma}
\newtheorem{corollary}[theorem]{Corollary}
\newtheorem{conjecture}[theorem]{Conjecture}
\theoremstyle{definition}
\numberwithin{equation}{section}
\author{Skyler Simmons}
\address{
Mathematics Department\\
Utah Valley University\\
Orem, UT, 84058
}
\email{skyler.simmons@uvu.edu}
\keywords{$n$-body problem \and binary collision \and regularization \and linear stability \and Broucke's orbit}
\subjclass{Primary 70F16, Secondary 37N05, 37J25}
\begin{document}

\title[Stability of Broucke's Orbit]{Stability of Broucke's Isosceles Orbit}

\maketitle

\begin{abstract}
We extend the result of Yan to Broucke's isosceles orbit with masses $m_1$, $m_1$, and $m_2$ with $2m_1 + m_2 = 3$.  Under suitable changes of variables, isolated binary collisions between the two mass $m_1$ particles are regularizable.  We analytically extend a method of Roberts to perform linear stability analysis in this setting.  Linear stability is reduced to computing three entries of a $4 \times 4$ matrix related to the monodromy matrix.  Additionally, it is shown that the four-degrees-of-freedom setting has a two-degrees-of-freedom invariant set, and linear stability results in the subset comes ``for free'' from the calculation in the full space.  The final numerical analysis shows that the four-degrees-of-freedom orbit is linearly unstable except for the interval $0.595 < m_1 < 0.715$, whereas the two-degrees-of-freedom orbit is linearly stable for a much wider interval.

\end{abstract}

\section{Introduction} 

Mathematically, the study of determining the motion of $n$ point masses in space whose motion is governed by Newton's gravitational law is known as the Newtonian $n$-body problem.  Notationally, if $\{q_1, q_2, ..., q_n\}$ represent the positions of the bodies in $\mathbb{R}^k$ ($k = 1, 2,$ or $3$) with masses $\{m_1, m_2, ..., m_n\}$ respectively, then their motion is governed by the system of differential equations
\begin{equation}
m_i \ddot{q}_i  = \sum_{i \neq j} \frac{m_i m_j (q_j - q_i)}{|q_i - q_j|^3},
\label{standardmotion}
\end{equation}
where the dot represents the derivative with respect to time.  Despite hundreds of years of study and the relatively recent development of computer ODE solvers, many open questions about the $n$-body problem remain.  \\

One aspect of the $n$-body problem that has been getting much attention of late are orbits involving collision singularities.  A \textit{collision singularity} occurs when $q_i = q_j$ for some $i \neq j$.  In the equations governing the motion, this results in a zero denominator in one or more terms in the sum.  Under certain conditions, these collisions can be \textit{regularized} and the solutions can be continued past collision. \\

In this paper, we will study Broucke's isosceles triangle orbit, originally presented in \cite{bibBrouckeOriginal}.  This is among the earliest-introduced orbits featuring collisions in the planar setting.  The orbit features regularizable collisions between two of the bodies, while the third oscillates along the vertical axis.  \\

The chief aim of this paper is to give stability results, including linear stability, for the orbit being considered in both a four-degrees-of-freedom (4DF) and a two-degrees-of-freedom (2DF) setting at the same time.  With our choice of coordinates in the 4DF setting, the linearized phase space of the regularized equations has an elegant decomposition into two invariant subspaces.  The linear stability analysis in the 2DF setting corresponds to one of these subspaces.  Additionally, we are able to use analytical techniques to reduce the linear stability analysis of the numerical calculation of three entries of a $4 \times 4$ matrix $K$ related to the monodromy matrix.  This type of stability calculation based on an invariant-subspace decomposition was done earlier in \cite{bibBSrhomb}.  A suitable coordinate transformation reveals this same structure in the Broucke's setting.  \\

Schubart \cite{bibSchubart} was one of the first to study periodic orbits with regularizable collisions.  He was able to find a collinear three-body equal-mass orbit where the central body alternated between collisions with the outer two.  This was further extended to the case of arbitrary masses numerically by H\'enon \cite{bibHenon} in 1977.  Analytic existence of the equal-outer-mass orbit was established independently by Venturelli \cite{bibVenturelli} and Mockel \cite{bibMoeckel}, both in 2008.  Shibiyama \cite{bibShib1} recently demonstrated the existence of the arbitrary-mass version.  The study of linear stability of Schubart's orbit was performed by Hietarinta and Mikkola \cite{bibHM1} in 1993.\\

Sweatman found a Schubart-like collinear four-body symmetric orbit in 2002 \cite{bibSweatman1}, and later studied its linear stability \cite{bibSweatman2}.  This orbit features simultaneous binary collisions between two outer pairs of bodies followed by an interior collision between the two central bodies.  Analytic existence of this orbit was given by Ouyang and Yan in \cite{bibYan3}.  \\

Apart from the Broucke's orbit, other planar orbits with regularizable collisions have also been studied.  Among the first of these is the rhomboidal four-body orbit, which features two pairs of bodies: one pair on the $x$-axis and the second on the $y$-axis.  The pairs collide at the origin in an alternating fashion.  This orbit was shown to exist analytically in multiple independent papers (by Yan in \cite{bibYan1} and Martinez in \cite{bibMartinez} for equal masses, \cite{bibShib1} for symmetric masses).  Additionally, Yan showed that for equal masses, the orbit is linearly stable.  This was followed up by work in \cite{bibBSrhomb}, in which linear stability was shown for a wide range of mass ratios.\\


Other planar orbits with singularities have also been studied.  A planar four-body orbit featuring simultaneous binary collisions was described in \cite{bibOYS}.  The orbit was shown to be linearly stable in \cite{bibBORSY}.  It was later shown that this orbit could be numerically extended to symmetric masses in \cite{bibBOYS1} (see also \cite{bibBOYS2}), and linear stability for this extension was shown for an interval of certain mass ratios in \cite{bibBMS}.  \\

More generally, analytic existence of large families of orbits with two degrees of freedom and regularizable singularities was recently proven by Shibayama in \cite{bibShib1} and Martinez in \cite{bibMartinez}. \\

The remainder of the paper is as follows:  In Section \ref{describe}, we describe the orbit and some of its properties.  In \ref{setting}, we give formal notation describing the orbit.  We also perform the coordinate transformations that regularize the collisions.  Section \ref{aExist} gives an analytic proof of the existence of the orbit in the regularized 4DF setting. \\

Section \ref{lsstart} deals with the linear stability of the orbit.  In \ref{theoryreview}, we review some of the basic properties of linear stability.  Next, \ref{syms} describes the symmetries of the orbit, which are needed to perform the analysis in \ref{Roberts}.  Section \ref{Kapriori} describes all of the remaining linear stability analysis that can be done before any numerical work, including the decomposition into two invariant sets mentioned earlier. \\

Finally, in Section \ref{Numerics}, we present the results obtained from the numerical calculations of stability.\\

\section{Broucke's Isosceles Orbit}\label{describe}

\subsection{Setting and Regularization}\label{setting}

We denote the position of the three bodies with the variables $\{q_i\}$.  The two bodies at $(q_1, q_2)$ and $(q_3, q_4)$ are given mass $m_1$, and the third body has mass $m_2$, where both $m_1$ and $m_2$ are non-negative real numbers.  Also, $p_i = m_1\dot{q}_i$ for $i= 1,2,3,4$, and $p_i = m_2\dot{q}_i$ for $i = 5,6$.  (See Figure \ref{figTheOrbit}.)\\

\begin{figure}[h]
\begin{center}
\begin{tikzpicture}[scale = .5]
\draw[<->] (-5,0)--(5,0);
\draw[<->] (0,-5)--(0,5);
\draw[fill] (2,3) circle [radius=.25] node[right] {$(q_1,q_2)$};
\draw[fill] (-2,3) circle [radius=.25] node[left] {$(q_3,q_4)$};
\draw[fill] (0,-4) circle [radius=.25] node[right] {$(q_5,q_6)$};
\draw[dotted] (2,3)--(-2,3);
\draw[dotted] (2,3)--(0,-4);
\draw[dotted] (-2,3)--(0,-4);
\end{tikzpicture}
\caption{The coordinates for Broucke's Orbit.}
\label{figTheOrbit}
\end{center}
\end{figure}
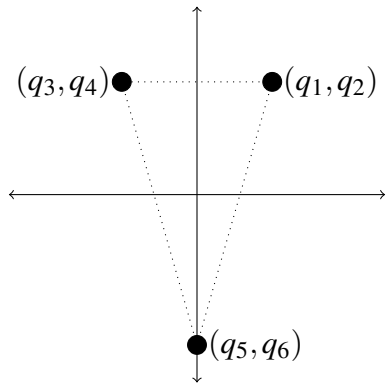

The orbit maintains the shape of an isosceles triangle for all time while alternating between collisions of the two bodies of mass $m_1$, as shown below.  Beginning at collision on the positive $y$-axis (shown by the lightest-colored circles in Figure \ref{figTheOrbit2}), the two bodies of mass $m_1$ move outward in a symmetric fashion as the mass $m_2$ body moves upward along the negative $y$-axis.  This eventually leads to a collinear configuration along the $x$-axis (darkest circles).  The motion is then continued in reverse with the vertical coordinates negated, leading to a collision of the two mass $m_1$ bodies on the negative $y$-axis.  After this collision, the bodies move back along the same paths leading back to a collision of the two mass $m_1$ bodies on the positive $y$-axis.  Throughout, the mass $m_2$ body oscillates along the $y$-axis.  Tracing out the full path the bodies take yields a shape resembling the letter $\Phi$.  \\

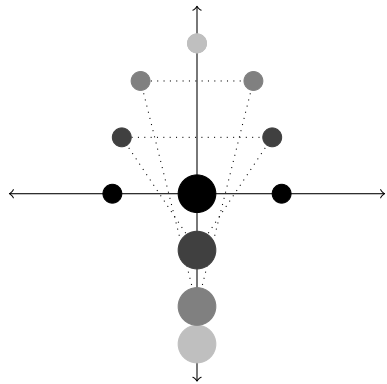
\begin{figure}[h]
\begin{center}
\begin{tikzpicture}[scale = .5]

\draw[<->] (-5,0)--(5,0);
\draw[<->] (0,-5)--(0,5);

\draw[dotted] (1.5,3)--(-1.5,3);
\draw[dotted] (1.5,3)--(0,-3);
\draw[dotted] (-1.5,3)--(0,-3);

\draw[dotted] (2,1.5)--(-2,1.5);
\draw[dotted] (2,1.5)--(0,-1.5);
\draw[dotted] (-2,1.5)--(0,-1.5);

\draw[fill, lightgray] (0,4) circle [radius=.25];
\draw[fill, lightgray] (0,4) circle [radius=.25];
\draw[fill, lightgray] (0,-4) circle [radius=.5];

\draw[fill, gray] (1.5,3) circle [radius=.25];
\draw[fill, gray] (-1.5,3) circle [radius=.25];
\draw[fill, gray] (0,-3) circle [radius=.5];

\draw[fill, darkgray] (2,1.5) circle [radius=.25];
\draw[fill, darkgray] (-2,1.5) circle [radius=.25];
\draw[fill, darkgray] (0,-1.5) circle [radius=.5];

\draw[fill, black] (2.25,0) circle [radius=.25];
\draw[fill, black] (-2.25,0) circle [radius=.25];
\draw[fill, black] (0,0) circle [radius=.5];

\end{tikzpicture}
\caption{Broucke's orbit as it evolves in time.  One-quarter period is shown.  The remainder of the orbit is obtained by a symmetric extension.}
\label{figTheOrbit2}
\end{center}
\end{figure}

For simplicity, we will assume that $2m_1 + m_2 = 3$ throughout.  Thus, the equal-mass case occurs when $m_1 = m_2 = 1$. \\

The angular momentum is given by
$$A_0 = q_1p_2 - q_2p_1 + q_3p_4 - q_4p_3 + q_5p_6 - q_6p_5.$$
Assuming the center of mass is located at the origin and net momentum is zero yields
$$q_5 = -\frac{m_1(q_1 + q_3)}{m_2}, \quad q_6 = -\frac{m_1(q_2 + q_4)}{m_2},$$
$$p_5 = -(p_1 + p_3), \quad p_6 = -(p_2 + p_4).$$

The Hamiltonian function in this setting, which we will denote $H_0$, is given by $H_0 = K_0 - U_0$, where
$$K_0 = \frac{1}{2}\left(\frac{p_1^2}{m_1} + \frac{p_2^2}{m_1} + \frac{p_3^2}{m_1} + \frac{p_4^2}{m_1} + \frac{(p_1+p_3)^2}{m_2} + \frac{(p_2 + p_4)^2}{m_2}\right)$$
and
\begin{align*}
U_0 &= \frac{m_1^2}{\left((q_1 - q_3)^2 + (q_2 - q_4)^2 \right)^{\frac{1}{2}}} \\
&+ \frac{m_1m_2}{\left(\left(q_1 + \frac{m_1(q_1 + q_3)}{m_2}\right)^2 + \left(q_2 + \frac{m_1(q_2 + q_4)}{m_2}\right)^2 \right)^{\frac{1}{2}}} \\
&+ \frac{m_1m_2}{\left(\left(q_3 + \frac{m_1(q_1 + q_3)}{m_2}\right)^2 + \left(q_4 + \frac{m_1(q_2 + q_4)}{m_2}\right)^2 \right)^{\frac{1}{2}}}.
\end{align*}

We wish to regularize the collision between the two particles of mass $m_1$.  We define a first transformation of variables $(q_i, p_i) \leftrightarrow (u_i, v_i)$ by means of the generating function
$$F_1 = \left(\frac{u_1 + u_3}{2}\right)p_1 + \left(\frac{u_2 + u_4}{2}\right)p_2 + \left(\frac{u_3 - u_1}{2}\right)p_3 + \left(\frac{u_4 - u_2}{2}\right)p_4.$$
Then, with $q_i = \partial F_1/\partial p_i$ and $v_i = \partial F_1/\partial u_i$,  we have
\begin{align*}
q_1 &= \frac{u_1 + u_3}{2} \ &v_1 &= \frac{p_1 - p_3}{2} \\
q_2 &= \frac{u_2 + u_4}{2} \ &v_2 &= \frac{p_2 - p_4}{2} \\
q_3 &= \frac{u_3 - u_1}{2} \ &v_3 &= \frac{p_1 + p_3}{2} \\
q_4 &= \frac{u_4 - u_2}{2} \ &v_4 &= \frac{p_2 + p_4}{2}. \\
\end{align*}
Solving for each $p_i$ gives
$$p_1 = v_1 + v_3, \quad p_2 = v_2 + v_4, \quad p_3 = v_3 - v_1, \quad p_4 = v_4 - v_2.$$
Substituting, the new Hamiltonian $H_1$ is given by $K_1 - U_1$, where
$$K_1 = \frac{1}{m_1}(v_1^2 + v_2^2 + v_3^2 + v_4^2) + \frac{2}{m_2}(v_3^2 + v_4^2)$$
and setting $\mu = \frac{1}{2} + \frac{m_1}{m_2}$, we have
\begin{align*}
U_1 &= \frac{m_1^2}{\left(u_1^2 + u_2^2 \right)^{\frac{1}{2}}} \\
&+ \frac{m_1m_2}{\left( \left(\frac{1}{2}u_1 + \mu u_3\right)^2 + \left(\frac{1}{2}u_2 +  \mu u_4\right)^2 \right)^{\frac{1}{2}}} \\
&+ \frac{m_1m_2}{\left( \left(\mu u_3 - \frac{1}{2}u_1\right)^2 + \left(\mu u_4 - \frac{1}{2} u_2\right)^2 \right)^{\frac{1}{2}}}.
\end{align*}
The angular momentum after this transformation is
$$A_1 = u_1v_2 - u_2v_1 + 2\mu(u_3v_4 - u_4v_3).$$

\textit{Remark:} This transformation ``codes in'' some of the symmetry present in the orbit.  In particular, when $u_2 = 0$ we have that $q_2 - q_4 = 0$, which places the two bodies of mass $m_1$ on the same horizontal line.  Similarly, if $u_3 = 0$ then $q_1 + q_3 = 0$, which places those same bodies symmetrically across the $y$-axis.  Similar results are present for $v_2  0$ and $v_3 = 0$.  As these behaviors are expected along the orbit, these are natural targets for the coordinate transform.  This also gives a similar setting to what is present in the rhomboidal problem discussed in \cite{bibBSrhomb}, wherein ``symmetry-preserving zeros'' facilitated analysis of the orbit. \\

The second transformation $(u_i, v_i) \leftrightarrow (Q_i, P_i)$ is generated by
$$F_2 = v_1(Q_1^2 - Q_2^2) + 2v_2Q_1Q_2 +  v_3Q_3 + v_4Q_4.$$
With $u_i = \partial F_2 / \partial v_i$ and $P_i = \partial F_2 / \partial Q_i$, we have

\begin{align*}
u_1 &= Q_1^2 - Q_2^2 \ &P_1 &= 2v_1Q_1 - 2v_2Q_2 \\
u_2 &= 2Q_1Q_2 \ & P_2 &= 2v_1Q_2 + 2v_2Q_1\\
u_3 &= Q_3 \ &P_3 &= v_3\\
u_4 &= Q_4 \ &P_4 &= v_4\\
\end{align*}

Solving for $v_1$ and $v_2$ gives
$$v_1 = \frac{Q_1P_1 - Q_2P_2}{2(Q_1^2 + Q_2^2)}, \quad v_2 = \frac{Q_2P_1 + Q_1P_2}{2(Q_1^2 + Q_2^2)}.$$

Making the substitution gives the new Hamiltonian $H_2 = K_2 - U_2$, with
$$K_2 = \frac{P_1^2 + P_2^2}{4m_1(Q_1^2 + Q_2^2)} + \left(P_3^2 + P_4^2\right)\left(\frac{1}{m_1} + \frac{2}{m_2}\right)$$
and (after some simplification)
\begin{align*}
U_2 &= \frac{m_1^2}{Q_1^2 + Q_2^2} \\
&+ \frac{m_1m_2}{\sqrt{\frac{1}{4}(Q_1^2+Q_2^2)^2 + \mu\mathbf{Q} + \mu^2(Q_3^2 + Q_4^2)}} \\
&+ \frac{m_1m_2}{\sqrt{\frac{1}{4}(Q_1^2+Q_2^2)^2 - \mu\mathbf{Q} + \mu^2(Q_3^2 + Q_4^2)}},
\end{align*}
where $\mathbf{Q} = (Q_1^2 - Q_2^2)Q_3 + 2Q_1Q_2Q_4$.

The angular momentum is now
\begin{equation}
\label{angMomFinal}
A_2 = \frac{1}{2}\left(Q_1P_2 - Q_2P_1\right) + 2\mu(Q_3P_4 - Q_4P_3).
\end{equation}

As a final step, we create the regularized Hamiltonian $\Gamma = \frac{dt}{ds}(H_2 - E)$, where $\frac{dt}{ds} = Q_1^2 + Q_2^2$.  Then
\begin{align*}
\Gamma &= \frac{P_1^2 + P_2^2}{4m_1} + \left(P_3^2 + P_4^2\right)\left(Q_1^2 + Q_2^2\right)\left(\frac{1}{m_1} + \frac{2}{m_2}\right) \\
&- \frac{m_1m_2\left(Q_1^2 + Q_2^2\right)}{\sqrt{\frac{1}{4}(Q_1^2+Q_2^2)^2 + \mu\mathbf{Q} + \mu^2(Q_3^2 + Q_4^2)}} \\
&- \frac{m_1m_2\left(Q_1^2 + Q_2^2\right)}{\sqrt{\frac{1}{4}(Q_1^2+Q_2^2)^2 - \mu\mathbf{Q} + \mu^2(Q_3^2 + Q_4^2)}} \\
&- m_1^2 - E(Q_1^2 + Q_2^2).
\end{align*}
It is easy to verify that when $Q_1^2 + Q_2^2 = 0$, the condition $\Gamma = 0$ forces $P_1^2 + P_2^2 = 4m_1^3$.  Hence, the binary collisions between the two equal-mass particles has been regularized.  Attempting to regularize the other binary collisions is not needed for this orbit, and is in general a difficult problem (see \cite{bibMoeckelMontgomery}).  Also, regularization of the triple collision where all $Q_i = 0$ (corresponding to total collapse of the system at the origin) is generally not possible (see \cite{bibMcGehee}).  

\subsection{Analytic Existence of the Orbit}\label{aExist}

In \cite{bibBrockeYan}, existence of Broucke's orbit was established in a reduced, two-degree-of-freedom setting with $m_1 = 1$.  We demonstrate the existence of the orbit in the larger, four-degree-of-freedom setting established here.

\begin{theorem}
Broucke's isosceles orbit described in Section \ref{setting} analytically exists for the Hamiltonian system given by $\Gamma$.
\end{theorem}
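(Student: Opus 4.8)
The plan is to use the built‑in isosceles symmetry of $\Gamma$ to drop to a two–degree–of–freedom invariant subsystem, produce the orbit there by a one–parameter shooting argument, and then close it up by reversing symmetries. First I would record the symmetry enabling the reduction: the map $R:(Q_2,P_2,Q_3,P_3)\mapsto(-Q_2,-P_2,-Q_3,-P_3)$, fixing $(Q_1,P_1,Q_4,P_4)$, is symplectic and preserves $\Gamma$. The kinetic terms of $\Gamma$ are manifestly invariant under $R$, while under $R$ one has $\mathbf{Q}\mapsto-\mathbf{Q}$, which only interchanges the two potential terms; since both carry the common factor $Q_1^2+Q_2^2$, the sum is unchanged. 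Hence $R$ commutes with the flow and its fixed set $\mathcal{I}=\{Q_2=P_2=Q_3=P_3=0\}$ is invariant. On $\mathcal{I}$ one has $\mathbf{Q}=0$, so the restricted Hamiltonian collapses to the two–degree–of–freedom system
\[\Gamma|_{\mathcal{I}}=\frac{P_1^2}{4m_1}+P_4^2Q_1^2\left(\frac{1}{m_1}+\frac{2}{m_2}\right)-\frac{2m_1m_2Q_1^2}{\sqrt{\tfrac{1}{4}Q_1^4+\mu^2Q_4^2}}-m_1^2-EQ_1^2,\]
which is exactly the regularized Broucke system in the separation variable $Q_1$ of the two $m_1$ bodies and the vertical variable $Q_4$. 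A periodic solution of $\Gamma|_{\mathcal{I}}$ is automatically one of $\Gamma$.

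Next I would set up the two symmetric sections joined by a quarter period. Place the $m_1$–$m_1$ collision at $s=0$: there $Q_1=0$, and on $\{\Gamma|_{\mathcal{I}}=0\}$ this forces $P_1^2=4m_1^3$ (the regularization identity noted above, with $P_2=0$), while the vertical turning point of the $m_2$ body gives $P_4=0$; take $P_1(0)=2m_1^{3/2}$ and leave $a:=Q_4(0)>0$ as the single free parameter. The opposite end is the collinear configuration, where the symmetric conditions are $Q_4=0$ ($m_2$ at the origin) and $P_1=0$ ($m_1$ bodies at maximal horizontal separation). The sets $\{Q_1=0,P_4=0\}$ and $\{Q_4=0,P_1=0\}$ are precisely the fixed sets of the anti‑symplectic reversing symmetries $\rho_0:(Q_1,P_4)\mapsto(-Q_1,-P_4)$ and $\rho_1:(Q_4,P_1)\mapsto(-Q_4,-P_1)$ of $\Gamma|_{\mathcal{I}}$. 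I would then flow the collision data forward to the first time $s_*(a)$ at which $Q_4=0$ and define the shooting function $g(a):=P_1(s_*(a))$. Because the $m_1$–$m_1$ collision has been regularized and the flow of $\Gamma$ is real‑analytic away from triple collision, $s_*(a)$ and $g(a)$ depend analytically on $a$ wherever $s_*$ is transversally defined. A zero $a_0$ of $g$ lands the arc in $\mathrm{Fix}(\rho_1)$ at $s_*$ while it already lies in $\mathrm{Fix}(\rho_0)$ at $s=0$; reflecting through these two sections by $\rho_0$ and $\rho_1$ extends the arc to a solution of period $4s_*$, which is real‑analytic as a concatenation of arcs of a real‑analytic flow through a regularized collision. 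Finally one checks that $Q_1$ and $Q_4$ never vanish simultaneously along the orbit, so the only non‑regularizable singularity reachable in $\mathcal{I}$, the triple collision, is avoided.

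The crux—and the step I expect to be the main obstacle—is showing that $g$ changes sign on the admissible range of $a$, together with the continuity and transversality of $s_*(a)$. The difficulty concentrates in the limiting regimes. As $a\to0^{+}$ the collision occurs near the origin and the orbit approaches total collapse, which is not regularizable (see \cite{bibMcGehee}); the limiting sign of $g$ must therefore be read off from the asymptotics near the triple‑collision manifold rather than from the flow itself. As $a\to\infty$ the $m_2$ body is weakly coupled and one must show that the $m_1$ binary reaches its horizontal turning point with $Q_4$ of the opposite, definite sign. Pinning down these two signs, and ruling out the degenerate scenarios in which $s_*(a)$ ceases to be defined or $g$ vanishes identically, is where the genuine work lies. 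Should the shooting signs prove intractable, I would fall back on a variational construction: minimize the action of $\Gamma|_{\mathcal{I}}$ over loops carrying the prescribed collision and collinear symmetries, exclude all collisions except the regularized binary one by an energy/level estimate (in particular excluding triple collision), and conclude that the minimizer is a genuine real‑analytic solution.
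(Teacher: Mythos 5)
Your reduction step is exactly the one the paper uses: the set $\{Q_2=P_2=Q_3=P_3=0\}$ is the paper's invariant set $\mathcal{A}$ (the image of $q_1=-q_3$, $q_2=q_4$, $p_1=-p_3$, $p_2=p_4$), your restricted Hamiltonian $\Gamma|_{\mathcal{I}}$ is correct, and your route to invariance via the symplectic reflection $R$ is a clean alternative to the paper's direct check that $\dot{Q}_2=\dot{Q}_3=\dot{P}_2=\dot{P}_3=0$ on $\mathcal{A}$. The divergence, and the genuine gap, is in what happens next. The paper does \emph{not} prove existence of the periodic orbit in the reduced two-degree-of-freedom system: it invokes the prior result of Broucke--Yan (\cite{bibBrockeYan}), where existence was already established in precisely this reduced setting (with $m_1=1$ and arbitrary $m_2$, extended to the present normalization by a mass rescaling), and then concludes that the 4DF orbit exists because $\mathcal{A}$ is invariant. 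You instead attempt to re-prove the 2DF existence from scratch by a shooting argument between the two reversing-symmetry sections $\{Q_1=0,\,P_4=0\}$ and $\{Q_4=0,\,P_1=0\}$, and you explicitly leave open the one step that carries all the content: the sign change of $g(a)=P_1(s_*(a))$, together with the well-definedness and transversality of the first-hitting time $s_*(a)$ in the limiting regimes $a\to 0^+$ (near triple collision, which is not regularizable) and $a\to\infty$. Without those signs the argument does not produce an orbit, so as written the proposal does not prove the theorem.

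Your fallback variational sketch has the same status: minimizing the action over symmetric loops and excluding all singularities except the regularized binary collision is itself a substantial theorem (this is essentially the content of the Shibayama and Martinez existence results the paper cites for related families, \cite{bibShib1}, \cite{bibMartinez}), not a routine estimate. To match the paper's proof you need only two things: the invariance of $\mathcal{I}$ under the flow of $\Gamma$, which you have, and a citation to an existing 2DF existence proof (or a complete shooting/variational argument replacing it). If your intent is to make the proof self-contained, the missing sign analysis of $g$ is where the work must go; everything else in your outline is sound.
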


\begin{proof}
Let $\mathcal{A}$ denote the set where
$$q_1 = -q_3\text{, } q_2 = q_4\text{, } p_1 = -p_3\text{, and } p_2 = p_4.$$
This corresponds to the setting used in \cite{bibBrockeYan}, wherein a symmetry across the $y$-axis is forced for all time.  In that paper, it was assumed that $m_2$ (corresponding to the body running along the $y$-axis) was arbitrary, and $m_1$ (corresponding to the mass of the colliding pair) was fixed to be 1.  With a suitable re-scaling of mass ratios, the proof in \cite{bibBrockeYan} shows that the orbit exists in this setting. \\

In the set $\mathcal{A}$, we have $u_2 = u_3 = v_2 = v_3 = 0$ in the first coordinate transformation.  In turn, $u_3 = v_3 = 0$ forces $Q_3 = P_3 = 0$.  The condition $u_2 = 0$ requires that either $Q_1 = 0$ or $Q_2 = 0$.  Additionally, since $v_2 = 0$, we have that $P_2 = 2v_1Q_2$ and $P_1 = 2v_1Q_1$.  In keeping with our analysis from \cite{bibBSrhomb}, we will make the choice to set $Q_2 = 0$.  Then, when $\mathcal{A}$ holds, we have
$$Q_2 = Q_3 = P_2 = P_3 = 0$$
\noindent Furthermore,  we have
$$\dot{Q}_2 \big|_{\mathcal{A}} = \dot{Q}_3 \big|_{\mathcal{A}} = \dot{P}_2 \big|_{\mathcal{A}} = \dot{P}_3 \big|_{\mathcal{A}} =0,$$
\noindent so $\mathcal{A}$ is invariant.  Since analytic existence of the 2DF was given in \cite{bibBrockeYan}, analytic existence in the 4DF setting follows from the invariance of $\mathcal{A}$ following a rescaling of the masses.
\end{proof}

As a further consequence of the invariant set $\mathcal{A}$, initial conditions for the periodic orbit in the 2DF setting automatically give the initial conditions for the 4DF setting.  This is very useful numerically as it reduces the number of calculations required to find initial conditions.

\section{Linear Stability Analysis}\label{lsstart}

The material in this section is very similar to that of \cite{bibBSrhomb}, which in turn draws on material in \cite{bibRoberts1}. We repeat it here for completeness.

\subsection{Linear Stability Theory}\label{theoryreview}
We will begin this section by briefly reviewing the basic definitions for linear stability.  Let $\Gamma$ be a smooth function defined on an open subset of $\mathbb{R}^8$.  Let $\gamma(s)$ be a $T$-periodic solution of the system $z' = JD\Gamma(z)$, where $' = d/ds$,
\begin{equation*}
J = 
\begin{bmatrix}
O & I \\
-I & O \\
\end{bmatrix},
\end{equation*}
and $I$ and $O$ are the $4 \times 4$ identity and zero matrices, respectively.  If $X(s)$ is the fundamental matrix solution of the linearized equations

\begin{equation}
\xi' = JD^2\Gamma(\gamma(s))\xi, \quad \xi(0) = I
\label{linearized1}
\end{equation}
then the monodromy matrix is given by $X(T)$ and satisfies $X(s + T) = X(s)X(T)$ for all $s$.  Eigenvalues of the monodromy matrix are also the characteristic multipliers of $\gamma$, and therefore determine the linear stability of $\gamma$.  In particular, $\gamma$ is \textit{spectrally stable} if all of its characteristic multipliers lie on the unit circle, and $\gamma$ is \textit{linearly stable} if it is spectrally stable and $X(T)$ is semisimple apart from trivial eigenvalues. \\

Additionally, if $Y$ is the fundamental matrix solution of
\begin{equation}\label{Linearized2}
\xi' = JD^2\Gamma(\gamma(s))\xi, \quad \xi(0) = Y_0
\end{equation}
for some invertible matrix $Y_0$, then by definition of $X(s)$, $Y(s) = X(s)Y_0$, implying $X(T) = Y(T)Y_0^{-1}$.  Then we have
$$X(T) = Y(T)Y_0^{-1} = Y_0(Y_0^{-1}Y(T))Y_0^{-1}$$
and so $X(T)$ and $Y_0^{-1}Y(T)$ are similar, and stability can be determined by the eigenvalues of either.\\

Linear stability is typically established by numerical integration.  Some elegant techniques for simplifying the numerical work by using the symmetries of an orbit were presented by Roberts in \cite{bibRoberts1}.  The relevant theory will be reviewed in Section \ref{Roberts}.

\subsection{Symmetries of the Orbit}\label{syms}

In order to utilize the techniques developed by Roberts, it is necessary to first identify the symmetries of the periodic orbit.

\begin{lemma}
The regularized Hamiltonian $\Gamma$ has symmetry group isomorphic to the Klein four group.
\end{lemma}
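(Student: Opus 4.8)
The plan is to exhibit four explicit symmetry transformations of the regularized Hamiltonian $\Gamma$, verify they form a group isomorphic to the Klein four group $\mathbb{Z}_2 \times \mathbb{Z}_2$, and show no larger symmetry group arises. Since the orbit has two distinguished reflection symmetries visible in Figure \ref{figTheOrbit2} — reflection across the $y$-axis and the time-reversal/vertical-negation symmetry described in the orbit's construction — I expect the group to be generated by two commuting involutions. Concretely, I would look for linear maps on the phase space $(Q_1,Q_2,Q_3,Q_4,P_1,P_2,P_3,P_4)$, possibly combined with a time reversal $s \mapsto -s$, that leave $\Gamma$ invariant.

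First I would translate the geometric symmetries of the orbit back through the two generating-function coordinate transformations to find their expression in the $(Q_i,P_i)$ variables. The reflection across the $y$-axis sends $q_1 \mapsto -q_1$, $q_3 \mapsto -q_3$ (swapping the two $m_1$ bodies) and fixes the vertical coordinates; tracing this through $F_1$ and $F_2$ should yield a sign-change/permutation pattern on the $Q_i$ and $P_i$. The second symmetry, coming from the ``continued in reverse with vertical coordinates negated'' structure, is a time-reversing symmetry that should negate the vertical position variables together with an appropriate sign change on momenta dictated by the symplectic form. For each candidate map $g$, I would check directly that $\Gamma \circ g = \Gamma$ by inspecting how each term transforms: the kinetic terms $P_1^2 + P_2^2$, $(P_3^2+P_4^2)(Q_1^2+Q_2^2)$, the potential terms built from $Q_1^2+Q_2^2$ and $\mathbf{Q} = (Q_1^2-Q_2^2)Q_3 + 2Q_1Q_2Q_4$, and the $E(Q_1^2+Q_2^2)$ term. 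The key observation that makes this tractable is that $\mathbf{Q}$ appears only through the combination $\pm\mu\mathbf{Q}$ in the two potential radicals, so a symmetry that sends $\mathbf{Q} \mapsto -\mathbf{Q}$ merely \emph{swaps} those two denominators, leaving $\Gamma$ fixed.

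Having verified two commuting involutions $g_1, g_2$ with $g_1^2 = g_2^2 = \mathrm{id}$ and $g_1 g_2 = g_2 g_1$, I would note that $\{\mathrm{id}, g_1, g_2, g_1 g_2\}$ is automatically isomorphic to the Klein four group, since any group generated by two commuting elements of order two with $g_1 \neq g_2$ has this structure. The final step is to argue \emph{maximality} — that there is no additional independent symmetry enlarging the group. For this I would rely on the specific orbit's configuration: the three masses are not all equal (we have two $m_1$ bodies and one distinguished $m_2$ body running along the $y$-axis), so there is no symmetry exchanging the $m_2$ body with an $m_1$ body, and the isosceles (rather than equilateral) geometry rules out threefold rotational symmetry.

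\textbf{The main obstacle} is carrying the geometric symmetries correctly through the nonlinear (quadratic) second coordinate transformation $F_2$, where $u_1 = Q_1^2 - Q_2^2$ and $u_2 = 2Q_1Q_2$ realize a complex-squaring/Levi-Civita map. Because this map is two-to-one, a single symmetry in the $u$-variables may correspond to a \emph{sign choice} $(Q_1,Q_2) \mapsto (-Q_1,-Q_2)$ in the regularized coordinates, and I must take care that the induced action on the momenta $P_1, P_2$ is computed consistently with $P_1 = 2v_1Q_1 - 2v_2Q_2$ and $P_2 = 2v_1Q_2 + 2v_2Q_1$ so that the maps are genuine symplectic involutions of $\Gamma$ rather than merely symmetries of the earlier Hamiltonian $H_1$. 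Confirming the group is exactly $\mathbb{Z}_2 \times \mathbb{Z}_2$, and neither smaller (the two involutions are genuinely distinct and nontrivial) nor larger, is the crux of the argument.
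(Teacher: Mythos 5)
Your core argument matches the paper's. The paper simply writes down the generator explicitly: with $G = \left[\begin{smallmatrix} 1 & 0 \\ 0 & -1 \end{smallmatrix}\right]$, the block-diagonal matrix $S = \mathrm{diag}(-G,-G,G,G)$ acting on $(Q_1,\dots,Q_4,P_1,\dots,P_4)$ satisfies $S^2 = (-S)^2 = I$ and $\Gamma\circ(\pm S) = \Gamma$, so $\{\pm I,\pm S\}\dots$ more precisely $\langle S, -S\rangle$ is a Klein four group of symmetries. Your ``key observation'' is exactly the mechanism at work: under $S$ one has $Q_1\mapsto -Q_1$, $Q_3\mapsto -Q_3$, hence $\mathbf{Q}\mapsto -\mathbf{Q}$, which swaps the two potential radicals while every other term of $\Gamma$ depends only on the invariant combinations $Q_1^2+Q_2^2$, $Q_3^2+Q_4^2$, $P_1^2+P_2^2$, $P_3^2+P_4^2$. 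Your plan of tracing the geometric reflections through $F_1$ and $F_2$ (with due care for the two-to-one Levi--Civita map) would recover precisely this $S$, so the verification step is sound and equivalent to the paper's.

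The one place you diverge is the maximality step, and there you should be careful: the paper makes no such claim, and its proof only asserts that $\pm S$ \emph{generate a} Klein-four symmetry group, which is all that is used later (Roberts' method only needs the two time-reversing symmetries). Read literally, maximality is actually false for the full symmetry group of $\Gamma$: since the regularized angular momentum $A_2$ in (\ref{angMomFinal}) Poisson-commutes with $\Gamma$, the Hamiltonian is invariant under the entire one-parameter group of symplectic transformations generated by $A_2$, so the symmetry group of $\Gamma$ as a function is not discrete. Your heuristic appeal to the isosceles (non-equilateral) geometry would not rescue this, and in the equal-mass case $m_1=m_2$ further discrete symmetries could appear. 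Drop the maximality claim and your argument is complete and equivalent to the paper's.
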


\begin{proof}
Let
\begin{equation*}
G = 
\begin{bmatrix}
1 & 0 \\ 0 & -1
\end{bmatrix},
\end{equation*}
and define the block matrix
\begin{equation}\label{sMatrix4df}
S = 
\begin{bmatrix}
-G & 0 & 0 & 0 \\
0 & -G & 0 & 0 \\
0 & 0 & G & 0 \\
0 & 0 & 0 & G
\end{bmatrix},
\end{equation}
where $0$ represents the $2 \times 2$ identity matrix.  Then we have
$$S^2 = (-S)^2 = I$$
Hence, $S$ and $-S$ generate a group isomorphic to the Klein four group.  For fixed values of $m$ and $E$, we have
$$\Gamma \circ (\pm S) = \Gamma$$
so $\pm S$ generate a Klein-four symmetry group for $\Gamma$ as well. \\
\end{proof}

Symmetries for $\Gamma$ also help to determine symmetries for the periodic orbit.

\begin{theorem}
Let $\gamma$ be a solution to the Hamiltonian system for some $m_1$, $m_2$ and $E < 0$ such that
$$\gamma(0) = (0, 0, 0, \zeta_4, 2m_1^{3/2}, 0, 0, 0)$$
and
$$\gamma(s_0) = (\zeta_1, 0, 0, 0, 0, 0, 0, \zeta_8)$$
for some values of $\zeta_1$, $\zeta_4$, and $\zeta_8$.  (In other words, $\gamma(0)$ corresponds to collision of the two bodies of mass $m_1$ somewhere along the $y$-axis away from the origin, and $\gamma(s_0)$ corresponds to a collinear arrangement of the three bodies along the $x$-axis.)  Then $\gamma$ extends to a $T = 4s_0$-periodic orbit, wherein $S$ and $-S$ are symmetries of the orbit.
\end{theorem}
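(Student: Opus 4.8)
The plan is to exploit the Klein four-group $\{I, -I, S, -S\}$ of symmetries of $\Gamma$ established in the previous lemma, using $S$ and $-S$ as \emph{time-reversing} symmetries that reflect the orbit about the two distinguished instants $s=0$ and $s=s_0$. First I would record the two facts that make $S$ (and identically $-S$) a time-reversing symmetry of the flow. Since $S$ is diagonal with $S^T = S$ and $S^2 = I$, a direct block computation against $J$ gives $S^T J S = -J$, so $S$ is anti-symplectic; combined with $\Gamma\circ S = \Gamma$ from the lemma, differentiating the identity $\Gamma(Sz)=\Gamma(z)$ yields $\nabla\Gamma(Sz)=S\nabla\Gamma(z)$, and one checks that $w(s):=S\gamma(-s)$ satisfies $w' = J\nabla\Gamma(w)$ whenever $\gamma$ does. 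Writing out the coordinate actions $Sz = (-Q_1, Q_2, -Q_3, Q_4, P_1, -P_2, P_3, -P_4)$ and $-Sz = (Q_1, -Q_2, Q_3, -Q_4, -P_1, P_2, -P_3, P_4)$, I would read off the fixed sets $\mathrm{Fix}(S) = \{Q_1=Q_3=P_2=P_4=0\}$ and $\mathrm{Fix}(-S) = \{Q_2=Q_4=P_1=P_3=0\}$, and verify directly that the hypotheses place $\gamma(0)\in\mathrm{Fix}(S)$ and $\gamma(s_0)\in\mathrm{Fix}(-S)$.

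Next I would convert these two incidences into reflection relations via uniqueness. Because $S\gamma(-s)$ is a solution agreeing with $\gamma$ at $s=0$ (as $S\gamma(0)=\gamma(0)$), uniqueness forces $\gamma(-s) = S\gamma(s)$ for all $s$; similarly, by autonomy and the time-reversing property, $-S\gamma(2s_0 - s)$ is a solution agreeing with $\gamma$ at $s=s_0$, giving $\gamma(2s_0 - s) = -S\gamma(s)$. Substituting $s\mapsto -s$ in the second relation and feeding in the first yields $\gamma(s + 2s_0) = -S\,S\gamma(s) = -\gamma(s)$, and iterating once more gives $\gamma(s + 4s_0) = \gamma(s)$. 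Thus $\gamma$ is $4s_0$-periodic, and the two reflection relations exhibit $S$ and $-S$ as symmetries of the resulting closed orbit.

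The point demanding care — and the main obstacle — is the legitimacy of the uniqueness step at $s=0$, where $\gamma(0)$ has $Q_1=Q_2=0$ and therefore sits exactly at a binary collision of the two mass-$m_1$ bodies. This is precisely where the regularization of Section \ref{setting} earns its keep: on the level set $\Gamma=0$ the condition $Q_1^2+Q_2^2=0$ forces $P_1^2+P_2^2=4m_1^3$ (and indeed the prescribed value $P_1=2m_1^{3/2}$ meets this), the regularized vector field $J\nabla\Gamma$ extends smoothly across the collision locus, and standard existence–uniqueness applies there. The reflection at $s=s_0$ is unproblematic, since that configuration is collinear on the $x$-axis with no collision.

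Finally, I would note that the derived relation $\gamma(s+2s_0)=-\gamma(s)$ is internally consistent, since $z\mapsto -z$ is itself a symplectic symmetry of $\Gamma$: the cubic term $\mathbf{Q}$ is odd, so $\mathbf{Q}\mapsto -\mathbf{Q}$ interchanges the two potential terms while all remaining terms are even, giving $\Gamma(-z)=\Gamma(z)$. In effect $-I = S\cdot(-S)$ lies in the same Klein four-group, so no new symmetry assumption is needed and the half-period relation is guaranteed compatible with the flow.
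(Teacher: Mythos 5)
Your proof is correct and rests on the same mechanism as the paper's --- the anti-symplectic symmetries $\pm S$ combined with uniqueness of solutions --- but your execution is logically more complete than the published argument. The paper's proof opens with ``if $\gamma(s)$ is a $T$-periodic solution\dots'' and then derives $-S\gamma(T/2-s) = \gamma(s) = S\gamma(T-s)$, so as written it presupposes the periodicity that the theorem asserts. You instead run the standard two-mirror argument: the hypotheses place $\gamma(0) \in \mathrm{Fix}(S) = \{Q_1 = Q_3 = P_2 = P_4 = 0\}$ and $\gamma(s_0) \in \mathrm{Fix}(-S) = \{Q_2 = Q_4 = P_1 = P_3 = 0\}$, each incidence yields a reflection identity by uniqueness, and composing the two reflections gives $\gamma(s+2s_0) = -\gamma(s)$ and hence $T = 4s_0$-periodicity together with exactly the two symmetry relations the paper states. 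This is the argument the paper's proof should be read as abbreviating, and it supplies the step the paper elides. Your explicit attention to the uniqueness step at $s=0$ --- where $\gamma(0)$ lies on the collision locus $Q_1 = Q_2 = 0$ and it is the smoothness of the regularized vector field, with $P_1^2 + P_2^2 = 4m_1^3$ enforced by $\Gamma = 0$, that licenses existence--uniqueness there --- is likewise a point the paper passes over in silence. All of your supporting computations (the coordinate action of $S$ and $-S$, the fixed sets, the anti-commutation $SJ = -JS$, and the invariance of $\Gamma$ under $z \mapsto -z$ via the oddness of $\mathbf{Q}$) check out against the definitions in Section \ref{setting} and the symmetry lemma.
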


\textit{Remark:} 
Although the $\Gamma = 0$ condition forces a relationship between $\zeta_1$ and $\zeta_8$, the most relevant detail is that both quantities are non-zero. \\

\begin{proof}
Note that if $\gamma(s)$ is a $T$-periodic solution to the regularized equations of motion resulting from the Hamiltonian $\Gamma$, a standard calculation shows that both $-S\gamma(T/2 - s)$ and $S\gamma(T - s)$ are solutions as well.  Existence and uniqueness of solutions then imply that
$$-S\gamma(T/2 - s) = \gamma(s) = S\gamma(T-s)$$
for all $s$.  Hence the symmetry group for Broucke's isosceles orbit is isomorphic to the Klein four group, with $S$ and $-S$ as generators.
\end{proof}

\subsection{Stability Reduction using Symmetries}\label{Roberts}

The following can be found in \cite{bibRoberts1}:

\begin{lemma}
Suppose $\gamma(s)$ is a $T$-periodic solution of a Hamiltonian system with Hamiltonian $\Gamma$ and a time-reversing symmetry $S$ such that:
\begin{enumerate}
\item[(i)] For some $n \in \mathbb{N}$, $\gamma(-s + T/N) = S(\gamma(s))$ for all $s$;
\item[(ii)] $\Gamma(Sz) = \Gamma(z)$;
\item[(iii)] $SJ = -JS$;
\item[(iv)] $S$ is orthogonal.
\end{enumerate}
Then the fundamental matrix solution $X(s)$ satisfies $$X(-s + T/N) = SX(s)S^T(X(T/N)).$$
\end{lemma}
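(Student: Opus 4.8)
The plan is to exhibit a matrix-valued function built from $X$ that solves the very same linearized system \eqref{linearized1} as $X$ itself, and then to invoke uniqueness of solutions of the matrix ODE. First I would set $Y(s) = X(-s + T/N)$ and differentiate, using the chain rule together with the fact that $X$ solves \eqref{linearized1}; this produces
$$Y'(s) = -J\,D^2\Gamma\bigl(\gamma(-s + T/N)\bigr)\,Y(s).$$
Hypothesis (i) then lets me replace $\gamma(-s+T/N)$ by $S\gamma(s)$ inside the Hessian, so the coefficient matrix becomes $-J\,D^2\Gamma(S\gamma(s))$.

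The key algebraic step is to convert $D^2\Gamma(S\gamma(s))$ back into an expression involving $D^2\Gamma(\gamma(s))$. Differentiating the invariance identity (ii), $\Gamma(Sz) = \Gamma(z)$, once yields $S^T\nabla\Gamma(Sz) = \nabla\Gamma(z)$, and differentiating a second time gives $S^T D^2\Gamma(Sz)\,S = D^2\Gamma(z)$. Using orthogonality (iv), $S^{-1} = S^T$, I can solve this for $D^2\Gamma(Sz) = S\,D^2\Gamma(z)\,S^T$. Substituting $z = \gamma(s)$ turns the coefficient of $Y$ into $-JS\,D^2\Gamma(\gamma(s))\,S^T$. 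Finally the anticommutation relation (iii), $SJ = -JS$ (equivalently $-JS = SJ$), lets me pull $J$ through $S$, producing
$$Y'(s) = S\,J\,D^2\Gamma(\gamma(s))\,S^T\,Y(s).$$

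Now I would introduce $Z(s) = S^T Y(s) = S^T X(-s+T/N)$. Multiplying the previous display on the left by $S^T$ and using $S^T S = I$ shows that $Z$ satisfies $Z'(s) = J\,D^2\Gamma(\gamma(s))\,Z(s)$ --- exactly the linearized system \eqref{linearized1}, but with initial data $Z(0) = S^T X(T/N)$ in place of the identity. Because $X$ is the fundamental matrix solution, any solution of this matrix ODE factors as $Z(s) = X(s)\,Z(0)$; this is precisely the uniqueness argument already used in Section \ref{theoryreview} to compare $X(T)$ with $Y_0^{-1}Y(T)$. Hence $S^T X(-s+T/N) = X(s)\,S^T X(T/N)$, and left-multiplying by $S$ gives the claimed identity $X(-s+T/N) = S\,X(s)\,S^T\,X(T/N)$.

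I expect the main obstacle to be purely one of careful bookkeeping: getting the transposes in the twice-differentiated Hessian identity and the orientation of the anticommutation (iii) correct, so that the conjugating factors emerge as $S(\cdot)S^T$ rather than $S^T(\cdot)S$, and so that no stray sign survives. Once the coefficient matrix of $Y$ has been massaged into the form $S\,\bigl[J D^2\Gamma(\gamma(s))\bigr]\,S^T$, the remainder is the standard fundamental-matrix uniqueness statement and requires no further orbit-specific input.
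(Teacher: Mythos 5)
Your argument is correct: the computation of $Y'(s)$, the twice-differentiated invariance identity $D^2\Gamma(Sz)=S\,D^2\Gamma(z)\,S^T$, the use of $SJ=-JS$, and the final appeal to uniqueness of the matrix initial value problem all check out (and the identity verifies at $s=0$). The paper itself gives no proof --- it cites this lemma directly from Roberts --- and your argument is precisely the standard one used there, so there is nothing to flag.
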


\noindent Note that the matrix $S$ given in (\ref{sMatrix4df}) satisfies all the required hypotheses.

\begin{corollary}
Under the same hypotheses,
$$X(T/N) = SB^{-1}S^TB \text{ where } B = X(T/2N).$$
\end{corollary}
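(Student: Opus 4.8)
The plan is to obtain the stated formula by specializing the identity from the preceding Lemma at a single, carefully chosen value of $s$ and then solving the resulting matrix equation for $X(T/N)$. The Lemma supplies, for all $s$,
\[
X(-s + T/N) = S X(s) S^T X(T/N),
\]
and the matrix $S$ of (\ref{sMatrix4df}) has already been checked to satisfy hypotheses (i)--(iv). The key observation is that the argument on the left, $-s + T/N$, coincides with $s$ exactly when $s = T/(2N)$. I would therefore set $s = T/(2N)$, so that both $X(-s + T/N)$ and the factor $X(s)$ appearing on the right collapse to the single matrix $X(T/(2N))$.

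Writing $B = X(T/(2N))$, the specialized identity becomes $B = S B S^T X(T/N)$. To isolate $X(T/N)$ I would use three facts. First, $B$ is invertible: $X(s)$ is a fundamental matrix solution, and since $\operatorname{tr}\!\big(JD^2\Gamma\big) = 0$ one has $\det X(s) \equiv 1$, so the cancellation below is legitimate. Second, $S$ is orthogonal by hypothesis (iv). Third, $S^2 = I$ from the Klein-four Lemma, whence $S^T = S^{-1} = S$. Left-multiplying $B = S B S^T X(T/N)$ by $(S B S^T)^{-1} = S B^{-1} S^T$ then gives
\[
X(T/N) = S B^{-1} S^T B,
\]
which is precisely the claimed expression with $B = X(T/2N)$.

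I do not expect a genuine obstacle here: once the substitution $s = T/(2N)$ is made, the remainder is routine linear algebra, and the hypotheses needed ($S$ orthogonal and involutive, $B$ invertible) are already available from the excerpt. The only points demanding care are bookkeeping ones --- confirming that $B^{-1}$ exists so that the cancellation is valid, and tracking the order of the factors when inverting $S B S^T$, since the non-commutativity of matrix multiplication makes it easy to record them in the wrong order. Accordingly I would simply cite the established properties of $S$ rather than recompute them, and present the derivation as the one-line specialization described above.
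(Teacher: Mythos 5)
Your derivation is correct and is exactly the intended one: the paper states this corollary without proof (citing Roberts), and the standard argument is precisely your substitution $s = T/(2N)$ into the Lemma, yielding $B = SBS^T X(T/N)$, followed by inverting $SBS^T$ using the orthogonality of $S$ and the invertibility of the fundamental matrix solution. No gaps.
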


\begin{corollary}\label{corFactor}
If $Y(s)$ is the fundamental matrix solution to (\ref{Linearized2}), then
$$Y(-s + T/N) = SY(s)Y_0^{-1}S^TY(T/N)$$
and
$$Y(T/N) = SY_0B^{-1}S^TB \text{ where } B = Y(T/2N).$$
\end{corollary}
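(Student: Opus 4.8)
The plan is to deduce both identities directly from the corresponding statements already established for the fundamental matrix solution $X(s)$ with $X(0) = I$, using the relation $Y(s) = X(s)Y_0$ recorded in Section \ref{theoryreview}. Equivalently $X(s) = Y(s)Y_0^{-1}$, so each identity satisfied by $X$ transfers to one for $Y$ after suitable multiplication by $Y_0^{\pm 1}$.

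First I would obtain the functional equation. Substituting $X(\cdot) = Y(\cdot)Y_0^{-1}$ into the preceding Lemma's identity $X(-s+T/N) = SX(s)S^TX(T/N)$ gives
$$Y(-s+T/N)Y_0^{-1} = SY(s)Y_0^{-1}S^TY(T/N)Y_0^{-1}.$$
Right-multiplying both sides by $Y_0$ cancels the trailing $Y_0^{-1}$ and produces
$$Y(-s+T/N) = SY(s)Y_0^{-1}S^TY(T/N),$$
which is the first assertion.

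For the second assertion I would specialize $s = T/(2N)$, so that $-s + T/N = T/(2N)$ and both $Y(-s+T/N)$ and $Y(s)$ equal $B := Y(T/2N)$. The functional equation then collapses to $B = SBY_0^{-1}S^TY(T/N)$, and left-multiplying by $(SBY_0^{-1}S^T)^{-1}$ solves for $Y(T/N)$. Since $S$ is orthogonal and symmetric (so $S^{-1} = S^T = S$, $S$ being a symmetric involution with $S^2 = I$), the inverse distributes in reversed order as $SY_0B^{-1}S^T$, yielding $Y(T/N) = SY_0B^{-1}S^TB$ as claimed. Alternatively, one may read this off the preceding Corollary: from $X(T/N) = SX(T/2N)^{-1}S^TX(T/2N)$ together with $B = X(T/2N)Y_0$, the substitutions $X(T/2N)^{-1} = Y_0B^{-1}$ and $X(T/2N)Y_0 = B$ give $Y(T/N) = X(T/N)Y_0 = SY_0B^{-1}S^TB$.

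The computation is routine; the only point requiring care is the bookkeeping of matrix order, as none of $S$, $Y_0$, $B$, and $Y(T/N)$ commute in general. In particular one must invert the product $SBY_0^{-1}S^T$ in the correct reversed order and invoke the involution property $S = S^T = S^{-1}$ precisely where it is needed, rather than silently commuting $S$ past the remaining factors. Beyond this bookkeeping there is no genuine obstacle.
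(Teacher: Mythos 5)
Your proof is correct and is exactly the derivation the paper intends (the corollary is stated there without proof): substitute $Y(s) = X(s)Y_0$ into the preceding Lemma and Corollary, cancel the trailing $Y_0^{-1}$, and specialize $s = T/(2N)$, using $S^{-1} = S^T$ to invert the product in the right order. The only cosmetic remark is that orthogonality of $S$ alone suffices; its symmetry, while true here, is not needed.
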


\noindent Similar results for time-preserving symmetries are also presented in \cite{bibRoberts1}, but are not needed for this orbit.  Using these results may allow the computation of the eigenvalues (hence stability) to be accomplished using only a fraction of the orbit.  Applying Corollary \ref{corFactor} with $N = 2$, $S$ as defined in (\ref{sMatrix4df}), and noting that $S^T = S$ yields
$$Y(T/2) = SY_0Y(T/4)^{-1}SY(T/4).$$
Similarly, if $N = 1$, since $S^2 = I$, we get
\begin{align*}
Y(T) &= -SY_0Y(T/2)^{-1}(-S)Y(T/2) \\
     &= SY_0[SY_0Y(T/4)^{-1}SY(T/4)]^{-1}S[SY_0Y(T/4)^{-1}SY(T/4)] \\   
     &= SY_0Y(T/4)^{-1}SY(T/4)Y_0^{-1}SY_0Y(T/4)^{-1}SY(T/4).
\end{align*}
This yields
\begin{align*}
Y_0^{-1}Y(T) &= Y_0^{-1}SY_0Y(T/4)^{-1}SY(T/4)Y_0^{-1}SY_0Y(T/4)^{-1}SY(T/4) \\
&= [Y_0^{-1}SY_0Y(T/4)^{-1}SY(T/4)]^2 \\
&= W^2
\end{align*}
with $W =Y_0^{-1}SY_0Y(T/4)^{-1}SY(T/4)$.  Hence, in order to analyze the stability of the orbit, we need only compute the entries of $Y$ along a quarter of the orbit. \\

Again, from \cite{bibRoberts1}:

\begin{lemma}
For a symplectic matrix $W$, suppose there is a matrix $K$ such that
\begin{equation*}
\frac{1}{2}(W + W^{-1}) =
\begin{bmatrix}
K^T & 0 \\
0 & K
\end{bmatrix}.
\end{equation*}
Then the eigenvalues of $W$ lie on the complex unit circle if and only if all of the eigenvalues of $K$ are real and lie in the interval $[-1,1]$
\end{lemma}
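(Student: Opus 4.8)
The plan is to reduce everything to understanding the spectrum of the matrix $M = \frac{1}{2}(W + W^{-1})$ and how it relates both to the eigenvalues of $W$ and to the eigenvalues of $K$. First I would observe that since $W$ is invertible, any eigenvector $v$ of $W$ with $Wv = \lambda v$ automatically satisfies $W^{-1}v = \lambda^{-1}v$, so that $Mv = \frac{1}{2}(\lambda + \lambda^{-1})v$. Thus the spectrum of $M$ is exactly $\{\frac{1}{2}(\lambda + \lambda^{-1}) : \lambda \in \mathrm{spec}(W)\}$. On the other side, since $M$ is block diagonal with blocks $K^T$ and $K$, its characteristic polynomial factors as $\det(K^T - xI)\det(K - xI) = \det(K - xI)^2$, so the spectrum of $M$ coincides, as a set, with the spectrum of $K$. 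Combining these two descriptions gives the identity $\mathrm{spec}(K) = \{\frac{1}{2}(\lambda + \lambda^{-1}) : \lambda \in \mathrm{spec}(W)\}$, which is the bridge between the two conditions in the statement.

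The heart of the argument is then a computation about the Joukowski map $\lambda \mapsto \frac{1}{2}(\lambda + \lambda^{-1})$. Writing $\lambda = re^{i\theta}$, one has $\frac{1}{2}(\lambda + \lambda^{-1}) = \frac{1}{2}(r + r^{-1})\cos\theta + \frac{i}{2}(r - r^{-1})\sin\theta$. I would show that this value is real and lies in $[-1,1]$ if and only if $r = 1$: realness forces $(r - r^{-1})\sin\theta = 0$, and the branch $\sin\theta = 0$ (i.e.\ $\lambda$ real) produces a value with absolute value at least $1$ by the AM--GM inequality, with equality only at $\lambda = \pm 1$, whereas the branch $r = 1$ yields exactly $\cos\theta \in [-1,1]$. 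Hence $\frac{1}{2}(\lambda + \lambda^{-1}) \in [-1,1]$ precisely when $|\lambda| = 1$.

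With these two ingredients the equivalence follows immediately in both directions. If every characteristic multiplier $\lambda$ of $W$ satisfies $|\lambda| = 1$, then every element of $\mathrm{spec}(K)$ has the form $\cos\theta$ and is therefore real and in $[-1,1]$. Conversely, if every eigenvalue of $K$ is real and in $[-1,1]$, then every $\frac{1}{2}(\lambda + \lambda^{-1})$ is real and in $[-1,1]$, and the Joukowski computation forces $|\lambda| = 1$ for every eigenvalue $\lambda$ of $W$. Note that working at the level of spectra as sets suffices, since the claim concerns only whether the multipliers lie on the unit circle and not their multiplicities.

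I expect the main obstacle to be stating the Joukowski-map computation cleanly enough to cover all cases at once, in particular being careful that the real eigenvalues $\lambda = \pm 1$ sit on the boundary of $[-1,1]$ and on the unit circle simultaneously, so that no spurious counterexamples leak in. The block-diagonal reduction and the reciprocal-eigenvalue observation are routine once the invertibility of $W$ is noted.
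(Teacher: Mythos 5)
Your argument is correct, and it is worth noting at the outset that the paper itself offers no proof of this lemma: it is quoted from Roberts \cite{bibRoberts1} and used as a black box, so your self-contained derivation is a genuine addition rather than a variant of anything in the text. The three ingredients --- the spectral identity $\mathrm{spec}\bigl(\tfrac{1}{2}(W+W^{-1})\bigr)=\bigl\{\tfrac{1}{2}(\lambda+\lambda^{-1}):\lambda\in\mathrm{spec}(W)\bigr\}$, the observation that the block-diagonal hypothesis makes this set equal to $\mathrm{spec}(K)$ since $\det(K^{T}-xI)=\det(K-xI)$, and the Joukowski-map computation showing that $\tfrac{1}{2}(\lambda+\lambda^{-1})$ is real and lies in $[-1,1]$ precisely when $|\lambda|=1$ --- fit together exactly as you describe, and your handling of the boundary cases $\lambda=\pm1$ is correct. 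The one step to tighten is the first: the eigenvector computation $Wv=\lambda v\Rightarrow Mv=\tfrac{1}{2}(\lambda+\lambda^{-1})v$ proves only the containment $\bigl\{\tfrac{1}{2}(\lambda+\lambda^{-1})\bigr\}\subseteq\mathrm{spec}(M)$, which is the direction needed to go from ``eigenvalues of $K$ real in $[-1,1]$'' to ``eigenvalues of $W$ on the unit circle.'' The forward implication needs the reverse containment --- that every eigenvalue of $M$ arises as $\tfrac{1}{2}(\lambda+\lambda^{-1})$ even when $W$ is not diagonalizable --- which follows from the spectral mapping theorem applied to $f(\lambda)=\tfrac{1}{2}(\lambda+\lambda^{-1})$, holomorphic away from the origin, or concretely from a Schur triangularization $W=UTU^{*}$, which simultaneously triangularizes $W^{-1}$ and hence $M$ with diagonal entries $\tfrac{1}{2}(\lambda_{i}+\lambda_{i}^{-1})$. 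With that standard fact supplied the proof is complete. Finally, observe that you never invoke the symplectic hypothesis; your argument applies to any invertible $W$ admitting such a block decomposition, the symplectic structure being what guarantees, in Roberts' framework, that a matrix $K$ of this form exists in the first place.
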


A matrix whose eigenvalues all lie on the complex unit circle is often referred to as \textit{stable}.

We now show that there is an appropriate choice of $Y_0$ for which $W$ has the required form, further reducing the stability calculations for the orbit.  If we let
\begin{equation*}
\Lambda =
\begin{bmatrix}
-I & 0 \\
0 & I 
\end{bmatrix},
\end{equation*}
then setting
\begin{equation}\label{y0form}
Y_0 =
\left[
\begin{array}{cccc|cccc}
0 & 0 & 0 & 0 & 1 & 0 & 0 & 0 \\
0 & 0 & 1 & 0 & 0 & 0 & 0 & 0 \\
0 & 0 & 0 & 0 & 0 & 1 & 0 & 0 \\
0 & 0 & 0 & 1 & 0 & 0 & 0 & 0 \\
\hline
-1 & 0 & 0 & 0 & 0 & 0 & 0 & 0 \\
0 & 0 & 0 & 0 & 0 & 0 & 1 & 0 \\
0 & -1 & 0 & 0 & 0 & 0 & 0 & 0 \\
0 & 0 & 0 & 0 & 0 & 0 & 0 & 1 
\end{array}
\right]
\end{equation}
yields $-Y_0^{-1}SY_0 = \Lambda$.  (The lines here are provided for ease in reading.  Much of our later analysis will involve breaking $8 \times 8$ matrices down into $4 \times 4$ blocks.)  Furthermore, it is easy to check that $Y_0$ is both orthogonal and symplectic.  If we set $D = -B^{-1}SB$ for $B = Y(T/4)$, we then have
$$W = \Lambda D.$$
Also, since $\Lambda^2 = D^2 = I$, we know immediately that
$$W^{-1} = D\Lambda.$$
Since $B = Y(T/4)$ is symplectic, setting
\begin{equation*}
B =
\begin{bmatrix}
B_1 & B_2 \\
B_3 & B_4
\end{bmatrix}
\text{ and }
S =
\begin{bmatrix}
S_1 & 0 \\
0 & -S_1
\end{bmatrix}
\end{equation*}
gives
\begin{align*}
D &= -B^{-1}SB \\
&= -
\begin{bmatrix}
K^T & L_1 \\
-L_2 & K
\end{bmatrix},
\end{align*}
where $L_1$, $L_2$, and $K$ are $4 \times 4$ matrices satisfying $L_1 = B_4^TS_1B_2 + B_2^TS_1B_4$, $L_2 = B_3^TS_1B_1 - B_1^TS_1B_3$, and $K = -B_2^TS_1B_2 - B_1^TS_1B_4$.  Thus,
\begin{equation*}
W = \Lambda D = 
\begin{bmatrix}
K^T & L_1 \\
L_2 & K
\end{bmatrix}.
\end{equation*}
Similarly, we find that
\begin{equation*}
W^{-1} = D \Lambda = 
\begin{bmatrix}
K^T & -L_1 \\
-L_2 & K
\end{bmatrix}.
\end{equation*}
Thus, we have
\begin{equation}
\frac{1}{2}\left(W + W^{-1}\right) = 
\begin{bmatrix}
K^T & 0 \\
0 & K
\end{bmatrix}.
\label{WKrelate}
\end{equation}

\textit{Remark:}
The given matrix $Y_0$ in (\ref{y0form}) is not unique.  Different choices of $Y_0$ are possible, but our particular choice is helpful for much of our later analysis.  It is also worth noting that our choice of $Y_0$ is independent of the values of $m_1$ and $m_2$ for this orbit, which is not always true (see \cite{bibBMS}.)

We can give formulas for the entries of $K$ in terms of $W$.  Since $B$ is symplectic, we have $J = B^TJB$, and hence
$$B^{-1} = -JB^T J.$$
Using $W = \Lambda D$ for $D = -B^{-1}SB$ and the relation $-SJ = JS$, we find
\begin{align*}
W &= \Lambda (-B^{-1}SB) \\
&= \Lambda JB^T JSB \\
&= -\Lambda JB^T SJB.
\end{align*}
Directly computing $\Lambda J$ and using the block form of $B$, we find that
\begin{equation*}
(\Lambda J) B^T = -
\begin{bmatrix}
0 & I \\
I & 0
\end{bmatrix}
\begin{bmatrix}
B_1^T & B_3^T \\
B_2^T & B_4^T
\end{bmatrix}
= -
\begin{bmatrix}
B_2^T & B_4^T \\
B_1^T & B_3^T
\end{bmatrix}
.
\end{equation*}
Define $\text{col}_i(-SJB)$ to be the $i$th column of the matrix $-SJB$.  Then we have $\text{col}_i(-SJB) = -SGc_i$ where $c_i$ is the $i$th column of $B$.  Using the above two formulas, this implies that the $(i,j)$ entry of $W$ is given by $-c_i^T S J C_j$.  Equation (\ref{WKrelate}) shows that the $(i,j)$ entry of $K$ is the $(i+4, j+4)$ entry of $W$.  Hence, 
\begin{equation}
K = 
\begin{bmatrix}
-c_1^T SJc_5 & -c_1^T SJc_6 & -c_1^T SJc_7 & -c_1^T SJc_8\\
-c_2^T SJc_5 & -c_2^T SJc_6 & -c_2^T SJc_7 & -c_2^T SJc_8\\
-c_3^T SJc_5 & -c_3^T SJc_6 & -c_3^T SJc_7 & -c_3^T SJc_8\\
-c_4^T SJc_5 & -c_4^T SJc_6 & -c_4^T SJc_7 & -c_4^T SJc_8\\

\end{bmatrix}.
\label{Kentries}
\end{equation}

\textit{Remark:}
Computing the entries of $K$ this way will allow us to bypass computing $W^{-1}$.  This is preferred as a numerical method as $W$ may be very poorly conditioned.

\subsection{Entries of $K$ from Invariant Quantities}\label{Kapriori}

Before any numerical work is done, we can determine many of the values of entries of $K$ by using properties of the orbit.  We first introduce some notation to simplify the analysis.  Let $\mathcal{M}$ denote the set of matrices of the form
\begin{equation*}
\begin{bmatrix}
m_{11} & 0 & 0 & m_{14}\\
0 & m_{22} & m_{23} & 0\\
0 & m_{32} & m_{33} & 0\\
m_{41} & 0 & 0 & m_{44}\\
\end{bmatrix}
\end{equation*}
where all of the listed $m_{ij} \in \mathbb{R}$ ($m_{ij} = 0$ is permitted).  Further, let $\mathcal{M}_2$ denote the set of $8 \times 8$ matrices whose $4 \times 4$ blocks are in $\mathcal{M}$.  That is to say, $\mathcal{M}_2$ consists of matrices of the form
\begin{equation*}
\begin{bmatrix}
M_1 & M_2\\
M_3 & M_4\\
\end{bmatrix}
\end{equation*}
where each of the $M_i \in \mathcal{M}$. It is easy to check that $\mathcal{M}$ forms a ring with the standard definitions of matrix addition and multiplication.  Furthermore, each element of $\mathcal{M}$ with nonzero determinant has its inverse in $\mathcal{M}$ as well.  The same ring structure exists for $\mathcal{M}_2$.\\

The following two lemmas will help establish an important theorem about the form of $K$:

\begin{lemma}\label{closure1}
If $M \in \mathcal{M}_2$, then the system of differential equations given by
$$\eta' = M\eta$$
and initial conditions
$$\eta(0) = (*, 0, 0, *, *, 0, 0, *)^T$$
has solutions of the form
$$\eta(s) = (f_1(s), 0, 0, f_4(s), f_5(s), 0, 0, f_8(s))^T$$
\end{lemma}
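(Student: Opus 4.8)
The plan is to reduce the claim to an invariant-subspace observation combined with uniqueness of solutions to linear systems. Let $V \subset \mathbb{R}^4$ denote the subspace of vectors of the form $(*,0,0,*)^T$, and let $\mathcal{V} \subset \mathbb{R}^8$ denote the subspace of vectors $(*,0,0,*,*,0,0,*)^T$ appearing in the hypothesis. The heart of the argument is that every matrix in $\mathcal{M}$ maps $V$ into $V$, and consequently every matrix in $\mathcal{M}_2$ maps $\mathcal{V}$ into $\mathcal{V}$.

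First I would verify the block-level claim by direct computation. If $A \in \mathcal{M}$ has entries $m_{ij}$ in the indicated pattern and $x = (x_1, 0, 0, x_4)^T \in V$, then the second and third entries of $Ax$ are $m_{22}\cdot 0 + m_{23}\cdot 0 = 0$ and $m_{32}\cdot 0 + m_{33}\cdot 0 = 0$, since the only nonzero entries in rows $2$ and $3$ of $A$ sit in columns $2$ and $3$. Hence $Ax \in V$. Writing a general $M \in \mathcal{M}_2$ in block form $\begin{bmatrix} M_1 & M_2 \\ M_3 & M_4 \end{bmatrix}$ with each $M_i \in \mathcal{M}$, and splitting $\eta = (\eta^{\mathrm{top}}, \eta^{\mathrm{bot}})$ with both halves in $V$, the top half of $M\eta$ is $M_1 \eta^{\mathrm{top}} + M_2 \eta^{\mathrm{bot}} \in V$ and likewise the bottom half lies in $V$; thus $M\eta \in \mathcal{V}$, establishing $M\mathcal{V} \subseteq \mathcal{V}$.

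With invariance in hand, I would finish via uniqueness. The restriction of $M$ to $\mathcal{V}$ is a well-defined linear endomorphism; parametrizing $\mathcal{V} \cong \mathbb{R}^4$ by the free coordinates $(\eta_1, \eta_4, \eta_5, \eta_8)$ yields a reduced system $\zeta' = \tilde{M}\zeta$ whose solution $\zeta(s)$ exists and is unique. Re-embedding $\zeta(s)$ into $\mathbb{R}^8$ by inserting zeros in slots $2,3,6,7$ produces a curve $\hat{\eta}(s)$ of the desired form that satisfies $\hat{\eta}' = M\hat{\eta}$ with the prescribed initial condition, precisely because $M$ acts on $\mathcal{V}$ through $\tilde{M}$. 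Since the initial value problem has a unique solution, $\eta = \hat{\eta}$, and the conclusion follows with $f_1 = \eta_1$, $f_4 = \eta_4$, $f_5 = \eta_5$, and $f_8 = \eta_8$.

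Since all of the steps are elementary, there is no serious obstacle; the only point requiring care is correctly tracking which rows and columns of the $\mathcal{M}$-pattern are permitted to be nonzero, so that the annihilation of the second and third block-entries is genuinely forced rather than merely assumed.
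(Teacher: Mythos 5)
Your proof is correct and follows essentially the same route as the paper: verify that matrices in $\mathcal{M}_2$ preserve the subspace of vectors vanishing in slots $2,3,6,7$, then invoke existence and uniqueness for the linear initial value problem. Your version is slightly more careful than the paper's in that you explicitly construct the solution via the reduced four-dimensional system before applying uniqueness, but the underlying idea is identical.
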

\begin{proof}
We verify that $M\eta$ has the proper form.  Note that 
\begin{equation*}
\left[
\begin{array}{cccc|cccc}
* & 0 & 0 & * & * & 0 & 0 & * \\
0 & * & * & 0 & 0 & * & * & 0 \\
0 & * & * & 0 & 0 & * & * & 0 \\
* & 0 & 0 & * & * & 0 & 0 & * \\
\hline
* & 0 & 0 & * & * & 0 & 0 & * \\
0 & * & * & 0 & 0 & * & * & 0 \\
0 & * & * & 0 & 0 & * & * & 0 \\
* & 0 & 0 & * & * & 0 & 0 & *
\end{array}
\right]
\begin{bmatrix}
* \\
0 \\
0 \\
* \\
* \\
0 \\

0 \\
* \\
\end{bmatrix}
=
\begin{bmatrix}
* \\
0 \\
0 \\
* \\
* \\
0 \\
0 \\
* \\
\end{bmatrix}.
\end{equation*}
Hence, the zeros in the 2nd, 3rd, 6th, and 7th are preserved under multiplication by $M$.  So 
$$\eta(s) = (f_1(s), 0, 0, f_4(s), f_5(s), 0, 0, f_8(s))^T$$
is a solution of $\eta' = M\eta$.  Existence and uniqueness of solutions implies that $\eta(s)$ is the only solution of the system.
\end{proof}

\begin{lemma}\label{closure2}
If $M \in \mathcal{M}_2$, then the system of differential equations given by
$$\eta' = M\eta$$
and initial conditions
$$\eta(0) = (0, *, *, 0, 0, *, *, 0)^T$$
has solutions of the form
$$\eta(s) = (0, f_2(s), f_3(s), 0, 0, f_6(s), f_7(s), 0)^T.$$
\end{lemma}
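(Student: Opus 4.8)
The plan is to mirror the proof of Lemma \ref{closure1} verbatim, replacing the support pattern $(*,0,0,*,*,0,0,*)^T$ by its complement $(0,*,*,0,0,*,*,0)^T$. The structural fact driving both lemmas is identical: the sparsity pattern defining $\mathcal{M}$ decouples the coordinate indices $\{1,4\}$ from $\{2,3\}$ inside each $4\times 4$ block, so that a matrix in $\mathcal{M}$ maps vectors supported on $\{2,3\}$ back into vectors supported on $\{2,3\}$. Assembling four such blocks into an element of $\mathcal{M}_2$, the combined effect is that the coordinate subspace $V = \{(0,\eta_2,\eta_3,0,0,\eta_6,\eta_7,0)^T\}$ is invariant under left multiplication. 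Lemmas \ref{closure1} and \ref{closure2} together thus record the fact that $\mathcal{M}_2$ preserves each of the two complementary coordinate subspaces, which is exactly the decomposition needed later.

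First I would verify by a direct block computation that, for any $M \in \mathcal{M}_2$, multiplication by $M$ preserves the zeros in positions $1$, $4$, $5$, and $8$. Concretely,
\begin{equation*}
\left[
\begin{array}{cccc|cccc}
* & 0 & 0 & * & * & 0 & 0 & * \\
0 & * & * & 0 & 0 & * & * & 0 \\
0 & * & * & 0 & 0 & * & * & 0 \\
* & 0 & 0 & * & * & 0 & 0 & * \\
\hline
* & 0 & 0 & * & * & 0 & 0 & * \\
0 & * & * & 0 & 0 & * & * & 0 \\
0 & * & * & 0 & 0 & * & * & 0 \\
* & 0 & 0 & * & * & 0 & 0 & *
\end{array}
\right]
\begin{bmatrix}
0 \\ * \\ * \\ 0 \\ 0 \\ * \\ * \\ 0
\end{bmatrix}
=
\begin{bmatrix}
0 \\ * \\ * \\ 0 \\ 0 \\ * \\ * \\ 0
\end{bmatrix},
\end{equation*}
since each of rows $1$, $4$, $5$, $8$ pairs its nonzero entries against the zero entries of the input vector. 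This confirms that the vector field $M\eta$ leaves $V$ invariant.

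From here the argument closes exactly as before. Because $\eta(0) \in V$ and $M$ maps $V$ to $V$, the ansatz $\eta(s) = (0, f_2(s), f_3(s), 0, 0, f_6(s), f_7(s), 0)^T$ is consistent with $\eta' = M\eta$: the trajectory never leaves $V$. Existence and uniqueness of solutions to the linear system $\eta' = M\eta$ with the prescribed initial condition then force this to be the unique solution, completing the proof.

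I do not expect any genuine obstacle here, since the statement is the mirror image of Lemma \ref{closure1}; the only point requiring care is the bookkeeping confirming that the complementary index set $\{2,3,6,7\}$ is individually invariant under $\mathcal{M}_2$, which the displayed block product makes transparent. The one thing I would flag for the reader is simply that this lemma, paired with the previous one, is what yields the splitting of the linearized phase space into two $\mathcal{M}_2$-invariant pieces used in the subsequent stability reduction.
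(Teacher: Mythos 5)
Your proposal is correct and is essentially the argument the paper intends: the paper proves Lemma \ref{closure1} by exactly this block-multiplication check plus existence--uniqueness, and leaves Lemma \ref{closure2} to the same mirrored computation on the complementary index set $\{2,3,6,7\}$, which you carry out correctly.
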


\begin{theorem}\label{kFormTheorem}
If $Y_0$ is given by (\ref{y0form}), then $K \in \mathcal{M}$.
\end{theorem}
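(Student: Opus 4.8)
The plan is to exploit the fact that the orbit $\gamma$ lives in the invariant set $\mathcal{A}$, so the linearized coefficient matrix lies in $\mathcal{M}_2$, and then to read off the vanishing entries of $K$ directly from the formula (\ref{Kentries}). First I would reinterpret $\mathcal{M}_2$ as a commutant. Let $R = \mathrm{diag}(1,-1,-1,1,1,-1,-1,1)$ act on $(Q_1,Q_2,Q_3,Q_4,P_1,P_2,P_3,P_4)$. Since $R$ is diagonal with $\pm 1$ entries, the equation $AR = RA$ holds precisely when $A_{ij} = 0$ whenever $i,j$ lie in different eigenspaces of $R$; the $+1$-eigencoordinates are $\{1,4,5,8\}$ and the $-1$-eigencoordinates are $\{2,3,6,7\}$, and this is exactly the sparsity defining $\mathcal{M}_2$. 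Thus $\mathcal{M}_2 = \{A : AR = RA\}$, and the invariant set $\mathcal{A}$ (where $Q_2 = Q_3 = P_2 = P_3 = 0$) is precisely the fixed-point set of $R$.

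The key step is to verify $\Gamma \circ R = \Gamma$. Under $R$ each of $Q_1^2+Q_2^2$, $Q_3^2+Q_4^2$, $P_1^2+P_2^2$, and $P_3^2+P_4^2$ is unchanged, whereas $\mathbf{Q} = (Q_1^2-Q_2^2)Q_3 + 2Q_1Q_2Q_4 \mapsto -\mathbf{Q}$. This sign flip merely interchanges the two square-root terms in $\Gamma$ (those carrying $+\mu\mathbf{Q}$ and $-\mu\mathbf{Q}$), leaving their sum, and hence $\Gamma$, invariant. Differentiating $\Gamma(Rz) = \Gamma(z)$ twice gives $R\,D^2\Gamma(Rz)\,R = D^2\Gamma(z)$, and evaluating on $\gamma(s) \in \mathrm{Fix}(R)$ (using $R = R^T = R^{-1}$) yields $R\,D^2\Gamma(\gamma(s)) = D^2\Gamma(\gamma(s))\,R$. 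Hence $D^2\Gamma(\gamma(s)) \in \mathcal{M}_2$; since $J$ also commutes with $R$ and $\mathcal{M}_2$ is a ring, the linearized coefficient matrix $A(s) = JD^2\Gamma(\gamma(s))$ lies in $\mathcal{M}_2$ for all $s$.

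With this in hand the conclusion is bookkeeping. The columns of $Y_0$ in (\ref{y0form}) split into two families: $c_1,c_4,c_5,c_8$ are supported on $\{1,4,5,8\}$ (the initial form of Lemma \ref{closure1}), while $c_2,c_3,c_6,c_7$ are supported on $\{2,3,6,7\}$ (the form of Lemma \ref{closure2}). Because each column of $Y(s)$ solves $\xi' = A(s)\xi$ with $A(s) \in \mathcal{M}_2$, Lemmas \ref{closure1} and \ref{closure2} guarantee that each column of $B = Y(T/4)$ keeps the support type of the corresponding column of $Y_0$. Since $S$ is diagonal and $J$ commutes with $R$, we get $SJ \in \mathcal{M}_2$, so $SJc_{j+4}$ has the same support type as $c_{j+4}$. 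In the formula (\ref{Kentries}), $K_{ij} = -c_i^T SJ c_{j+4}$ is then an inner product of two vectors with disjoint supports whenever $c_i$ and $c_{j+4}$ are of different type, forcing $K_{ij} = 0$. Tracking the types ($c_1,c_4$ and $c_5,c_8$ of the first kind; $c_2,c_3$ and $c_6,c_7$ of the second) leaves possibly nonzero entries only at $(1,1),(1,4),(4,1),(4,4)$ and $(2,2),(2,3),(3,2),(3,3)$, i.e.\ $K \in \mathcal{M}$.

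The main obstacle is the second step: spotting the reflection $R$ and checking $\Gamma \circ R = \Gamma$, where the crux is that the sign change $\mathbf{Q} \mapsto -\mathbf{Q}$ swaps the two collision terms rather than breaking invariance. (Alternatively one could verify $D^2\Gamma(\gamma(s)) \in \mathcal{M}_2$ by computing the Hessian entries on $\mathcal{A}$, but the symmetry argument avoids that calculation.) Once $A(s) \in \mathcal{M}_2$ is established, everything else is a routine application of the closure lemmas together with the sparsity of $SJ$.
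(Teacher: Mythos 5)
Your proof is correct, and it reaches the crucial intermediate fact---that $JD^2\Gamma(\gamma(s))\in\mathcal{M}_2$ along the orbit---by a genuinely different route than the paper. The paper establishes this by a direct computation: a computer algebra system is used to evaluate the Hessian $D^2\Gamma$ and verify that the required entries vanish identically or vanish under the conditions defining $\mathcal{A}$. You instead recognize $\mathcal{M}_2$ as the commutant of the involution $R=\mathrm{diag}(1,-1,-1,1,1,-1,-1,1)$, whose fixed-point set is exactly $\mathcal{A}$, and derive the sparsity from the invariance $\Gamma\circ R=\Gamma$ (the key observation being that $\mathbf{Q}\mapsto-\mathbf{Q}$ merely swaps the two collision potential terms) together with $RJ=JR$. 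Differentiating the invariance twice and evaluating on $\mathrm{Fix}(R)$ then gives the commutation relation, hence membership in $\mathcal{M}_2$, with no entry-by-entry Hessian computation. This is a cleaner and more conceptual argument: it explains \emph{why} the zeros appear (they are forced by a discrete symmetry of $\Gamma$ distinct from $\pm S$) rather than merely observing them, and it would generalize to perturbations of the potential that preserve the $R$-symmetry. The remainder of your argument---propagating the column support types via Lemmas \ref{closure1} and \ref{closure2} and reading off the vanishing entries of $K$ from the inner-product formula (\ref{Kentries}) using disjointness of supports---is equivalent to the paper's conclusion via the ring structure of $\mathcal{M}_2$ and Equation (\ref{WKrelate}), just unwound column by column. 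Both endgames are valid; the paper's is marginally slicker in that closure of $\mathcal{M}_2$ under products and inverses handles $W$, $W^{-1}$, and $\frac{1}{2}(W+W^{-1})$ in one stroke.
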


\begin{proof}
Using a computer algebra system, we find that the matrix $JD^2\Gamma$ is of the form
\begin{equation*}
\begin{bmatrix}
O & I \\
-I & O \\
\end{bmatrix}
\left[
\begin{array}{cccc|cccc}
* & \omega & \omega & * & 0 & 0 & \omega & * \\
\omega & * & * & \omega & 0 & 0 & \omega & \omega \\
\omega & * & * & \omega & 0 & 0 & 0 & 0 \\
* & \omega & \omega & * & 0 & 0 & 0 & 0 \\
\hline
0 & 0 & 0 & 0 & * & 0 & 0 & 0 \\
0 & 0 & 0 & 0 & 0 & * & 0 & 0 \\
\omega & \omega & 0 & 0 & 0 & 0 & * & 0 \\
* & \omega & 0 & 0 & 0 & 0 & 0 & *
\end{array}
\right]
=
\left[
\begin{array}{cccc|cccc}
0 & 0 & 0 & 0 & * & 0 & 0 & 0 \\
0 & 0 & 0 & 0 & 0 & * & 0 & 0 \\
\omega & \omega & 0 & 0 & 0 & 0 & * & 0 \\
* & \omega & 0 & 0 & 0 & 0 & 0 & * \\
\hline
* & \omega & \omega & * & 0 & 0 & \omega & * \\
\omega & * & * & \omega & 0 & 0 & \omega & \omega \\
\omega & * & * & \omega & 0 & 0 & 0 & 0 \\
* & \omega & \omega & * & 0 & 0 & 0 & 0
\end{array}
\right].
\end{equation*}
Here, the zeros denote entries for which the mixed partials evaluate to zero identically, and the entries denoted $\omega$ are entries for which the mixed partials evaluate to zero assuming the conditions in $\mathcal{A}$ which hold along the periodic orbit $\gamma(s)$.  Under such conditions, we have $JD^2\Gamma \in \mathcal{M}_2$.  If $\xi(0) \in \mathcal{M}_2$, then each of the columns of $\xi(0)$ has the same form as in either Lemma \ref{closure1} or \ref{closure2}.  Hence, the solution to the system of linearized equations given by (\ref{Linearized2}) satisfies $\xi(s) \in \mathcal{M}_2$ for all $s$.  Since $Y_0 \in \mathcal{M}_2$, $\xi(s) \in \mathcal{M}_2$ for all $s$.  Lastly, because $M \in \mathcal{M}_2$ implies $M^{-1} \in \mathcal{M}_2$ when $M^{-1}$ exists, Equation (\ref{WKrelate}), gives $K \in \mathcal{M}$ as claimed.
\end{proof}

\textit{Remarks:}
\begin{enumerate}
\item[(i)] The structure of $\mathcal{M}_2$ very nicely decomposes phase space into a direct sum of $\mathcal{A} = \{Q_2 = Q_3 = P_2 = P_3 = 0\}$ and $\mathcal{A}^\perp = \{Q_1 = Q_4 = P_1 = P_4 = 0\}$.  This decomposition is due in part to the coordinate transformation we chose.  The choice of notation for $\mathcal{A}^\perp$ is appropriate in that $\mathcal{A}^\perp$ and $\mathcal{A}$ are orthogonal complements in $\mathbb{R}^8$.  The two are also skew-orthogonal: if $a_1 \in \mathcal{A}$ and $a_2 \in \mathcal{A}^\perp$, then $a_1^TJa_2 = 0$.

\item[(ii)] Matrices of the form $\mathcal{M}$ and $\mathcal{M}_2$ are similar to the diamond product discussed in \cite{bibLong}.  Specifically, $\Sigma^{-1} M \Sigma = \mathcal{A}_1 \Diamond \mathcal{A}_2$ for some matrices $\mathcal{A}_1$ and $\mathcal{A}_2$, where $M \in \mathcal{M}_2$ and $\Sigma$ is a (not necessarily unique) permutation matrix.  Furthermore, one of the $\mathcal{A}_i$ (depending on choice of $\Sigma$) corresponds to the 2DF setting.

\item[(iii)] The particular choice of $Y_0$ given in (\ref{y0form}) is important for this argument.
\end{enumerate}

In light of Theorem \ref{kFormTheorem}, we need only to find eight of the entries of the matrix $K$.  We can, in fact, reduce this number further by using invariant properties of the orbit $\gamma(s)$.  As is well-known, invariant quantities of the $n$-body problem are center of mass, net momentum, angular momentum, and energy (the Hamiltonian itself).  Each of these correspond to trivial eigenvalues of the monodromy matrix.  The center of mass and net momentum were ``factored out'' by our choice of coordinates at the beginning.  The remaining two invariant quantities will be used to reduce the number of entries of $K$ needed to find its eigenvalues.  

\begin{theorem}
The matrix $K$ has a right eigenvector $[1,0,0,0]^T$, corresponding to eigenvalue $-1$.
\label{ev1}
\end{theorem}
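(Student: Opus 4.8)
The plan is to realize the asserted eigenvector as a manifestation of energy conservation, carried through the factorization $W = \Lambda D$ with $D = -B^{-1}SB$, $B = Y(T/4)$, and $\Lambda = \mathrm{diag}(-I,I)$ established above. Since $\gamma' = JD\Gamma(\gamma)$, differentiation shows that the tangent vector $\dot\gamma(s)$ solves the linearized system (\ref{linearized1}), so $\dot\gamma(s) = X(s)\dot\gamma(0)$, and periodicity of $\gamma$ makes $\dot\gamma$ a periodic solution. The first task is to evaluate $\dot\gamma(0) = JD\Gamma(\gamma(0))$ at the collision configuration $\gamma(0) = (0,0,0,\zeta_4,2m_1^{3/2},0,0,0)$. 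Here $Q_1^2+Q_2^2 = 0$, $\mathbf{Q}=0$, and $P_3=P_4=0$, which forces every momentum rate to vanish: each $\partial\Gamma/\partial Q_j$ is a sum of a kinetic term proportional to $(P_3^2+P_4^2)$, potential terms whose $Q_j$-derivatives are proportional either to $Q_1,Q_2$ or to the overall factor $Q_1^2+Q_2^2$, and the term $-2EQ_j$, all of which are zero at $\gamma(0)$. Likewise $\dot Q_2=\dot Q_3=\dot Q_4=0$, while $\dot Q_1 = \partial\Gamma/\partial P_1 = P_1/(2m_1) = m_1^{1/2}\ne 0$. Thus $\dot\gamma(0) = m_1^{1/2}e_1$, where $e_1$ denotes the first standard basis vector of $\mathbb{R}^8$.

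Next I would pass to the $Y_0$-coordinates. Reading (\ref{y0form}), $e_1$ is exactly the fifth column of $Y_0$, so $Y_0^{-1}e_1 = e_5$ and
$$B e_5 = Y(T/4)e_5 = X(T/4)Y_0 e_5 = X(T/4)e_1 = m_1^{-1/2}X(T/4)\dot\gamma(0) = m_1^{-1/2}\dot\gamma(T/4).$$
The symmetry now enters. Differentiating the relation $\gamma(s) = -S\gamma(T/2 - s)$ from Section \ref{syms} gives $\dot\gamma(s) = S\dot\gamma(T/2 - s)$, and at $s = T/4$ this reads $\dot\gamma(T/4) = S\dot\gamma(T/4)$; that is, $\dot\gamma(T/4)$ lies in the $+1$-eigenspace of $S$. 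Consequently $S(Be_5) = Be_5$, so $B^{-1}SB e_5 = e_5$ and hence $D e_5 = -B^{-1}SB e_5 = -e_5$.

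Finally I would assemble the conclusion. Because $e_5$ lies in the lower $4$-block, on which $\Lambda$ acts as $+I$, we obtain $W e_5 = \Lambda D e_5 = -\Lambda e_5 = -e_5$, so $e_5$ is an eigenvector of $W$ with eigenvalue $-1$; therefore $W^{-1}e_5 = -e_5$ as well and $\tfrac12(W+W^{-1})e_5 = -e_5$. Comparing with the block identity (\ref{WKrelate}), whose lower diagonal block is $K$, the lower four entries of $\tfrac12(W+W^{-1})e_5 = -e_5$ read $K[1,0,0,0]^T = -[1,0,0,0]^T$, which is precisely the claim.

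The delicate step is the evaluation of $\dot\gamma(0)$: one must confirm that the regularization genuinely annihilates every force term at the binary collision, so that $\dot\gamma(0)$ points purely along $e_1$, and one must keep the bookkeeping straight between the $(Q,P)$-basis and the permuted basis built into $Y_0$. The entire argument hinges on $e_1$ being the fifth column of $Y_0$ and on $e_5$ sitting in the block where $\Lambda = +I$; once those two identifications are in hand, the eigenvalue $-1$ is forced by the symmetry relation and the block structure of $W$, with no numerical input required.
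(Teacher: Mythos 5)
Your proposal is correct and is essentially the paper's argument: both identify $v = Y_0^{-1}\dot\gamma(0) = e_5$ as a $-1$-eigenvector of $W$ by using the fact that the tangent vector $\dot\gamma$ solves the variational equation together with the time-reversing symmetry, and then read off the first column of $K$ from the block structure of $\tfrac12(W+W^{-1})$. The only difference is cosmetic: you evaluate the symmetry at the quarter-period fixed point ($S\dot\gamma(T/4)=\dot\gamma(T/4)$) and work through $W=\Lambda D$ with $B=Y(T/4)$, whereas the paper evaluates it at $s=0$ (giving $\dot\gamma(T/2)=-\dot\gamma(0)$) and uses $W=Y_0^TY(T/2)$; the two computations are equivalent, and your explicit verification that $\dot\gamma(0)=m_1^{1/2}e_1$ supplies the detail the paper leaves as ``$\gamma'(0)=(\omega,0,\dots,0)$.''
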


\begin{proof}
Let $v = Y_0^{-1}\gamma^{\ '}(0)/||\gamma^{\ '}(0)||$ or, equivalently, $Y_0^T\gamma^{\ '}(0)/||\gamma^{\ '}(0)||$.  By Corollary \ref{corFactor}, since $Y_0$ is orthogonal and $S$ is symmetric, we have
$$W = Y_0^{-1}SY_0B^{-1}SB = Y_0^{-1}SY_0B^{-1}S^TB = Y_0^TY(T/2).$$
Since $\gamma^{\ '}(s)$ is a solution of $\dot{\xi} = JD^2\Gamma(\gamma(s))\xi$ and $\gamma^{\ '}(0) = Y(0)Y_0^{-1}\gamma^{\ '}(0) = Y(0)v$, we also know that $\gamma^{\ '}(s) = Y(s)Y_0^{-1}\gamma^{\ '}(0) = Y(s)v$.  This implies
\begin{equation}\label{eigeq1}
Y_0^{-1}\gamma^{\ '}(T/2) = Y_0^TY(T/2)v = Wv.
\end{equation}
By the symmetry $\gamma(s) = -S\gamma(T/2 - s)$, we also have $\gamma^{\ '}(s) = S\gamma^{\ '}(T/2 - s)$.  Setting $s = 0$ in this setting tells us that $\gamma^{\ '}(0) = S\gamma^{\ '}(T/2)$.  Since
$$\gamma^{\ '}(0) = (\omega, 0, 0, 0, 0, 0, 0, 0)$$
for some real number $\omega$, we have $-S\gamma^{\ '}(0) = \gamma^{\ '}(0)$.  Thus
\begin{equation}\label{eigeq2}
Y_0^{-1}\gamma^{\ '}(T/2) = Y_0^TS\gamma^{\ '}(0) = -Y_0^T\gamma^{\ '}(0) = -v.
\end{equation}
Combining (\ref{eigeq1}) and (\ref{eigeq2}) gives $Wv = -v$, and so $-1$ is an eigenvalue of $W$ with eigenvector $v$.  By definition, we have that
$$v = Y_0^T\gamma^{\ '}(0)/||\gamma^{\ '}(0)|| = (0, 0, 0, 0, 1, 0, 0, 0).$$
From the form of $W$, this implies that
\begin{equation*}
K
\begin{bmatrix}
1 \\
0 \\
0 \\
0
\end{bmatrix}
=
\begin{bmatrix}
-1 \\
0 \\
0 \\
0 \\
\end{bmatrix}.
\end{equation*}
So $[1, 0, 0, 0]^T$ is an eigenvector of $K$ with eigenvalue $-1$.  Consequently, the first column of $K$ must be $[-1,0,0,0]^T$. \\
\end{proof}

Combining the results of the previous two theorems gives
\begin{equation}\label{Kform}
K = 
\begin{bmatrix}
-1 & 0 & 0 & * \\
0 & a & b & 0 \\
0 & c & d & 0 \\
0 & 0 & 0 & e
\end{bmatrix}.
\end{equation}

\textit{Remark:} Owing to the decomposition of linearized phase space into two invariant subspaces and the ordering of the coordinates, the position of $e$ in the matrix $K$ indicates that it should be an eigenvalue corresponding to the behavior of the orbit in $\mathcal{A}$.  This eigenvalue, along with the trivial eigenvalue $-1$ from the $(1,1)$ position, completely classify linear stability in the 2DF setting.  Hence, computing the linear stability of the 4DF orbit in the chosen coordinates automatically gives the stability of the 2DF orbit.  (The results will be discussed further in Section \ref{stabilities}.) \\

We can make use of the final invariant quantity, angular momentum, to further simplify our calculations.  This is an extension of Roberts' method from \cite{bibRoberts1}, in which coordinate transformations ``factor out'' the angular momentum before linearization is performed.  In the following theorem, we are able to show that this invariant quantity can be used to simplify linear stability calculations after linearization.

\begin{theorem}
The matrix $K$ has a left eigenvector $\nabla A(\gamma(0))Y_0$ with eigenvalue $-1$.
\label{ev2}
\end{theorem}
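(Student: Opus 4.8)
The plan is to mirror the structure of the preceding theorem (Theorem \ref{ev1}), but work with the angular momentum $A = A_2$ of (\ref{angMomFinal}) in place of the velocity vector $\gamma'(0)$, and to exploit the fact that angular momentum is conserved along the flow. The key observation is that conservation of $A$ means its gradient is carried by the linearized flow in a controlled way: if $\gamma(s)$ solves the Hamiltonian system, then differentiating $A(\gamma(s)) = A(\gamma(0))$ along the flow shows that $\nabla A(\gamma(s))$ is a \emph{left} solution of the variational equation, i.e. $\nabla A(\gamma(s)) = \nabla A(\gamma(0)) Y(s)^{-1}$ (up to the appropriate normalization by $Y_0$). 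This is the dual/adjoint counterpart of the right-eigenvector argument, and it is precisely the mechanism by which invariants produce trivial characteristic multipliers.

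The steps I would carry out, in order, are as follows. First, I would recall from Corollary \ref{corFactor} and the orthogonality of $Y_0$ and symmetry of $S$ that $W = Y_0^T Y(T/2)$, the same identity already established in the proof of Theorem \ref{ev1}. Second, I would use the symmetry relation $\gamma(s) = -S\gamma(T/2 - s)$ to understand how $\nabla A$ transforms under $S$: since $A$ is quadratic and one checks directly that $A(Sz) = -A(z)$ (the matrix $S$ reverses the sign of angular momentum, as it should for a time-reversing symmetry with $SJ = -JS$), it follows that $\nabla A(Sz)\,S = -\nabla A(z)$. Third, I would combine the flow-invariance of $\nabla A$ with this symmetry identity, evaluating at the two endpoints $s = 0$ and $s = T/2$, exactly paralleling equations (\ref{eigeq1}) and (\ref{eigeq2}) but now acting on the \emph{left} by the row vector $w := \nabla A(\gamma(0)) Y_0$. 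This should yield $w W = -w$, establishing that $w$ is a left eigenvector of $W$ with eigenvalue $-1$. Finally, I would compute $\nabla A(\gamma(0))$ explicitly from (\ref{angMomFinal}) at the collision configuration $\gamma(0) = (0,0,0,\zeta_4, 2m_1^{3/2}, 0, 0, 0)$ and multiply by $Y_0$ to read off which block of $W$ the eigenvector relation lives in, thereby converting the statement about $W$ into the claimed statement about $K$ via the block relation (\ref{WKrelate}).

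The main obstacle I anticipate is the bookkeeping in the third step: the left-eigenvector argument is formally dual to the right-eigenvector one, but the conservation law $A(\gamma(s)) = \text{const}$ gives an identity for $\nabla A$ along the flow whose precise form ($\nabla A(\gamma(s)) = \nabla A(\gamma(0)) Y(s)^{-1}$ versus a relation involving $J$ and $\nabla A$ as a Hamiltonian vector field) must be pinned down carefully so that the factor of $Y_0^T$ in $W = Y_0^T Y(T/2)$ lands correctly. Concretely, conservation gives $\frac{d}{ds} A(\gamma(s)) = \nabla A \cdot JD\Gamma = 0$, and differentiating the variational relation requires that $\nabla A(\gamma(s)) Y(s)$ be constant in $s$; verifying this uses the skew-symmetry $\nabla^2 A \, J = -J\,\nabla^2 A$ together with the symplectic structure, which is the one computation I would need to do with care rather than by analogy. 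The sign bookkeeping in $A(Sz) = -A(z)$ and the resulting $-1$ (rather than $+1$) eigenvalue is the other place where an error could creep in, and I would double-check it against the explicit form of $\nabla A(\gamma(0)) Y_0$ to confirm consistency with the location of the trivial multipliers already found.
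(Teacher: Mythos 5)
Your overall architecture is the paper's: differentiate the conservation law $A(\phi(s,z)) = A(z)$ with respect to the initial condition to get $\nabla A(\gamma(s))\,X(s) = \nabla A(\gamma(0))$, combine this at $s = T/2$ with $W = Y_0^T Y(T/2)$ and the half-period symmetry to obtain a left eigenvector of $W$, and then read off the statement about $K$ from the block form (\ref{WKrelate}). (The paper does the invariance step exactly as you describe, by differentiating $A(\phi(s,z))=A(z)$ in $z$, so your worry about pinning down the adjoint variational relation resolves the way you expect.) However, there is one concrete error in your step three: the identity $A(Sz) = -A(z)$ is false. With $S$ as in (\ref{sMatrix4df}), the map sends $Q_1 \mapsto -Q_1$, $Q_3 \mapsto -Q_3$, $P_2 \mapsto -P_2$, $P_4 \mapsto -P_4$ and fixes the remaining coordinates, so every monomial in $A_2 = \tfrac12(Q_1P_2 - Q_2P_1) + 2\mu(Q_3P_4 - Q_4P_3)$ acquires either two sign flips or none; hence $A(Sz) = +A(z)$. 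The heuristic that a time-reversing symmetry must reverse angular momentum does not apply: $S$ combines a spatial reflection with the time reversal, and each factor reverses $A$, so the composite preserves it (and the orbit has $A=0$ anyway, so the orbit itself imposes no constraint).

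This sign matters: carried through your chain, $A(-Sz) = -A(z)$ would give $\nabla A(-S z)(-S) = -\nabla A(z)$, hence $\hat v(T/2) = \nabla A(-S\gamma(0)) = \nabla A(\gamma(0))\,S = \nabla A(\gamma(0))$ (one checks $\nabla A(\gamma(0))S = \nabla A(\gamma(0))$ at the collision configuration), and you would conclude $wW = +w$, i.e.\ eigenvalue $+1$ rather than $-1$. The correct bookkeeping — and the paper's shortcut — is that the relevant symmetry at the half period is $\gamma(T/2) = -S\gamma(0)$, that $\gamma(0)$ is a \emph{fixed point} of $S$ so $\gamma(T/2) = -\gamma(0)$, and that $\nabla A$ is homogeneous of degree one (gradient of a quadratic form), so $\hat v(T/2) = \nabla A(-\gamma(0)) = -\hat v(0)$. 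The eigenvalue $-1$ comes entirely from the minus sign in $-S$ via this homogeneity, not from any sign reversal of $A$ under $S$. You did flag this sign as the place to double-check, and the rest of your outline (including the explicit evaluation of $\nabla A(\gamma(0))Y_0$ and the passage to $K$) matches the paper, so the repair is local; but as written the asserted identity is wrong and the proof would land on the wrong eigenvalue.
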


\begin{proof}
This proof is based on ideas given in \cite{bibMHO}, p. 134, Lemma 7.  Define $\hat{v}(s) = \nabla A(\gamma(s))$, where $A$ represents the regularized angular momentum given in (\ref{angMomFinal}).  Then 
$$\hat{v} = \left(\frac{1}{2}P_2, -\frac{1}{2}P_1, 2\mu P_4, -2\mu P_3, -\frac{1}{2}Q_2, \frac{1}{2}Q_1, -2\mu Q_4, 2\mu Q_3 \right).$$
Since
$$\gamma(0) = (0,0,0,\zeta_4,2m_1^{3/2},0,0,0),$$
we know that
$$\hat{v}(0) = \left(0,-m_1^{3/2}, 0, 0, 0, 0, -2\mu\zeta_4, 0\right).$$

Let $\phi(s,z)$ be the general solution to the system of regularized differential equations with initial condition $z$.  Then
$$A(\phi(s,z)) = A(z).$$
Differentiating with respect to $z$ gives
$$\nabla A(\phi(s,z))\frac{\partial \phi}{\partial z}(s,z) = \nabla A(z)$$
or, equivalently
$$\hat{v}(s)X(s) = \hat{v}(0)$$
where $X(s)$ is the fundamental matrix solution.  Setting $s = T/2$ and substituting $X(T/2) = Y_0(Y_0^{-1}Y(T/2))Y_0^{-1}$ gives
$$\hat{v}(T/2)Y_0(Y_0^{-1}Y(T/2))Y_0^{-1} = \hat{v}(0)$$
and so
$$\hat{v}(T/2)Y_0(Y_0^{-1}Y(T/2)) = (\hat{v}(T/2)Y_0)W= \hat{v}(0)Y_0.$$
By the symmetry of the orbit, $\gamma(T/2) = -\gamma(0)$, which gives
$$\hat{v}(T/2) = -\hat{v}(0),$$
and therefore
$$(\hat{v}(0)Y_0)W= -\hat{v}(0)Y_0.$$
Hence $\hat{v}(0)Y_0$ is a left eigenvector for $W$ with eigenvalue $-1$.
\end{proof}

\textit{Remarks:}
\begin{enumerate}
\item[(i)] Since $K$ is real-valued, this result, along with other results about the form of $K$, force all of the eigenvalues of $K$ to be real.
\item[(ii)] This analysis is an improvement over work done in \cite{bibBMS}, in which the $-1$ eigenvalue corresponding to angular momentum showed up numerically but could not be factored out \textit{a priori}.
\end{enumerate}

To find the eigenvector, we readily compute $\hat{v}(0)Y_0 = (0,-2\mu\zeta_4,m_1^{3/2},0,0,0,0,0)$.  From this, we know that
$$(0,-2\mu\zeta_4,m_1^{3/2},0)K^T = - (0,-2\mu\zeta_4,m_1^{3/2},0).$$
Since $K \in \mathcal{M}$, this requires that the additional $-1$ eigenvalue comes from the central $2 \times 2$ block in $K$.  Furthermore, this imposes some relations on the entries $a, b, c, d$ in (\ref{Kform}).  In particular,
$$-2\mu\zeta_4 a + m_1^{3/2} b = 2\mu \zeta_4$$
$$-2\mu\zeta_4 c + m_1^{3/2} d = -m_1^{3/2} $$
Since Theorems \ref{ev1} and \ref{ev2} account for two of the eigenvalues of $K$, we have shown that we only need to find three entries of the matrix $K$ in order to determine the linear stability of the orbit.  Specifically, finding $e$ gives the linear stability in the 2DF setting.  Combining this result with one element of $\{a,b\}$ and one element of $\{c,d\}$ allows us to determine the second eigenvalue corresponding to linear stability in the 4DF setting. \\

\section{Results}\label{Numerics}

\subsection{Initial Conditions of the Orbit}

Recall that the initial conditions for the orbit in the regularized setting determined by $\Gamma$ are
$$\gamma(0) = (0,0,0,\zeta_4,2m_1^{3/2},0,0,0).$$
For $m_1 = 0$, the corresponding orbit consists of a body of mass 3 at the origin and two massless particles on the $x$-axis.  If $m_1 = 1.5$, the corresponding orbit contains a massless particle at the origin and two bodies of mass 1.5 on the $x$-axis.  Note that any vertical motion of the positive-mass particle(s) in either case would cause the center of mass to drift away from the origin, so the orbit must remain collinear for all time.  Hence, we expect that at as $m_1 \to 0^+$ or $m_1 \to 1.5^-$, the value of $Q_4(0)$ should approach 0. \\

Setting $E = -1$, and using numerical methods, we find the value of $Q_4(0)$.  The results of that calculation are shown in Figure \ref{Q4plot}.  Numerical difficulties occurred as $m_1 \to 1.5^-$ (equivalently $m_2 \to 0^+$), likely due to the prevalence of $m_1/m_2$ terms occurring in the equations of motion. (In fact, numerical methods were not able to find initial conditions for values of $m_1 \geq 1.47$.)\\
\begin{figure}[h]
\begin{center}
\includegraphics[scale=0.6]{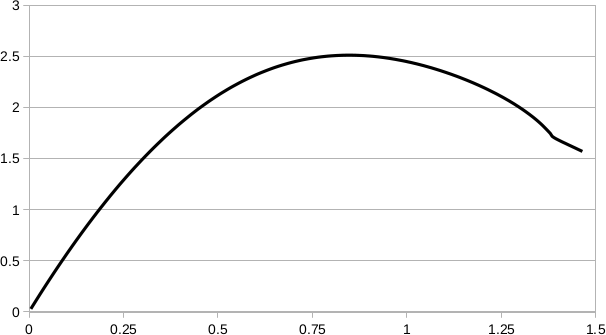}
\caption{A plot of the value of $Q_4(0)$.  The value of $m_1$ is plotted on the horizontal axis.}
\label{Q4plot} 
\end{center}
\end{figure}

\subsection{Stability in Two Settings}\label{stabilities}

As discussed in Section \ref{aExist}, in order to find the initial conditions for the 4DF orbit, we need only to find the initial conditions for the 2DF orbit.  This is done numerically for values of $m_1 = 0.005, 0.010, 0.015, ..., 1.485, 1.490, 1.495$.  

We numerically obtain the matrix $W$ (hence $K$) by a numerical integration of the linearized systems using the initial conditions discussed previously.  The values of $a$, $d$, and $e$ in the matrix $K$, as given in (\ref{Kform}), are readily computed using (\ref{Kentries}).  Knowing these, we are able to determine the eigenvalues of $K$.  The results are represented in Figures \ref{k11plot} through \ref{eandtr}. \\

\begin{figure}[h]
\begin{center}
\includegraphics[scale=.6]{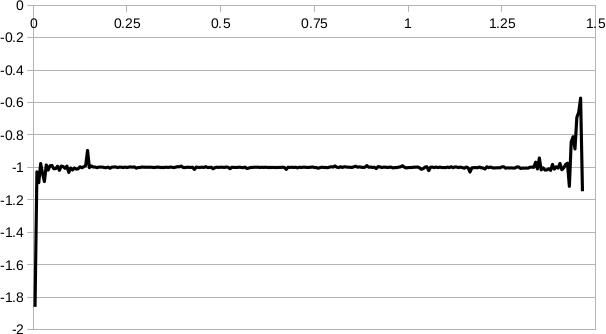}
\caption{A plot of the numerically-computed upper-left entry $k_{11}$ from the matrix $K$ in Equation \ref{Kform} (vertical) against $m_1$ (horizontal).}
\label{k11plot} 
\end{center}
\end{figure}

\begin{figure}[h]
\begin{center}
\includegraphics[scale=.6]{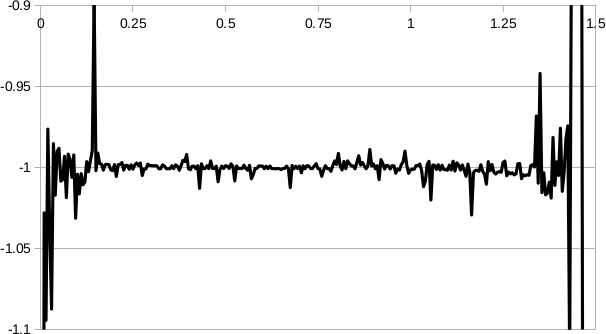}
\caption{A vertically rescaled plot of the graph from Figure \ref{k11plot}.}
\label{k11plotzoom} 
\end{center}
\end{figure}

Figures \ref{k11plot} and \ref{k11plotzoom} are used as verification of the numerical method, comparing the proven value $k_{11} = -1$ against the numerically computed value. \\

\begin{figure}[h]
\begin{center}
\includegraphics[scale=.6]{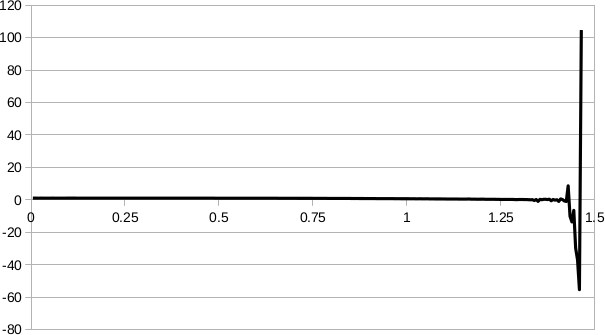}
\caption{A plot of the value of $e$ from Equation (\ref{Kform}) corresponding to stability in the 2DF setting.}
\label{stabplot2full} 
\end{center}
\end{figure}

\begin{figure}[h]
\begin{center}
\includegraphics[scale=.6]{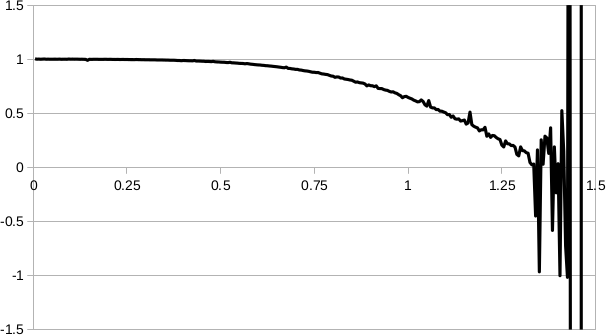}
\caption{A vertically rescaled plot of the value of $e$.  The curve appears to be asymptotic to the line $y = 1$ as $m_1 \to 0^-$.}
\label{stabplot2zoom} 
\end{center}
\end{figure}

In the 2DF setting, the numerical result is that $|e| < 1$ for all values of $m_1$ computed between $0.125$ through $1.4$ inclusive.  However, for values of $m_1 > 1.295$, the curve $y = e(m_1)$ is no longer monotonically decreasing, which corresponds to the sharp bend in Figure \ref{Q4plot}.  (This sharp bend and the particular numerical difficulties will be further discussed in an upcoming paper.) \\

Following these calculations, we obtain the following:
\begin{theorem}
Broucke's orbit in the 2DF setting is linearly stable for $m_1$ lying in an open interval containing $[0.125, 1.295]$.
\end{theorem}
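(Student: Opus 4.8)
The plan is to reduce the linear stability of the 2DF orbit to a single scalar inequality on the entry $e$ of the matrix $K$ in (\ref{Kform}), and then to verify that inequality by the numerical integration set up in this section. The reduction is almost entirely in place already. By the Remark following Theorem \ref{kFormTheorem}, the ordering of coordinates together with the containment $K \in \mathcal{M}$ means that the behavior of the orbit on the invariant set $\mathcal{A}$ is governed precisely by the two eigenvalues of $K$ in the $\mathcal{A}$-directions, namely the trivial eigenvalue $-1$ furnished by Theorem \ref{ev1} and the entry $e$ in the $(4,4)$ position; the central block with entries $a,b,c,d$ acts on $\mathcal{A}^\perp$ and is irrelevant for the 2DF problem. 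Invoking the Lemma relating $\tfrac{1}{2}(W + W^{-1})$ to $K$, spectral stability of the orbit holds if and only if the relevant eigenvalues of $K$ are real and lie in $[-1,1]$. The eigenvalue $-1$ sits on the boundary automatically, so spectral stability in the 2DF setting reduces to the lone condition $e \in [-1,1]$.

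Next I would compute $e$ as a function of $m_1$. Using the initial conditions $\gamma(0) = (0,0,0,\zeta_4,2m_1^{3/2},0,0,0)$ supplied by the 2DF data (which, as noted after the existence theorem, simultaneously serve as the 4DF initial conditions), I would integrate the linearized system (\ref{Linearized2}) with $Y_0$ as in (\ref{y0form}) over a quarter period to form $B = Y(T/4)$, and then read off $e = -c_4^{T} S J c_8$ directly from (\ref{Kentries}), bypassing any inversion of the possibly ill-conditioned matrix $W$. Sweeping $m_1$ across the grid $0.005, 0.010, \dots, 1.495$ produces the curve displayed in Figures \ref{stabplot2full} and \ref{stabplot2zoom}, from which one reads $|e| < 1$ with a clear margin throughout $[0.125, 1.295]$.

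Two short steps then upgrade spectral stability to linear stability on an open interval. Because $e \in (-1,1)$ strictly, the characteristic multipliers produced by $e$ solve $\lambda^2 - 2e\lambda + 1 = 0$ and hence form a conjugate pair $e \pm i\sqrt{1-e^2}$ lying on the unit circle but bounded away from $\pm 1$; these multipliers are simple, so $W$ is semisimple apart from the trivial multiplier $-1$, and the orbit is genuinely linearly stable rather than merely spectrally stable. Finally, $e$ depends continuously on $m_1$ through the smooth dependence of solutions of (\ref{Linearized2}) on parameters, so the set $\{m_1 : |e(m_1)| < 1\}$ is open; since it contains the compact set $[0.125,1.295]$, it contains an open interval about it, which is exactly the assertion of the theorem.

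The genuine obstacle is that the decisive inequality $|e| < 1$ is obtained numerically on a finite grid rather than by a validated computation. To make the conclusion fully rigorous one would need interval arithmetic for the integration of (\ref{Linearized2}), or an explicit modulus of continuity controlling $e(m_1)$ between grid points. The deliberately conservative phrasing ``an open interval containing $[0.125,1.295]$'' reflects precisely this limitation: the numerics establish the strict inequality on the tested range and continuity promotes it to an open neighborhood, but the exact endpoints of the maximal stability interval are not pinned down.
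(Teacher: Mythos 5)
Your proposal is correct and follows essentially the same route as the paper: reduce 2DF stability to the single condition $|e|<1$ via the block structure of $K$ and the trivial eigenvalues from Theorems \ref{ev1} and \ref{ev2}, compute $e = -c_4^T S J c_8$ numerically over a quarter period using (\ref{Kentries}), and read off the stated interval from the computed curve. Your added remarks on upgrading spectral to linear stability via simplicity of the conjugate pair, and on the openness of $\{m_1 : |e(m_1)|<1\}$ justifying the phrase ``an open interval containing,'' are consistent with (and slightly more explicit than) the paper's presentation, which states the theorem directly from the numerical results.
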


For $m_1 \in [0.01, 1.25]$, the computed values of $e$ all lie within the interval $[0.998, 1.002]$, with the computed values of $k_{11}$ in the interval $[0.905, 1.095]$.  Allowing for minor numerical error, we propose the following:

\begin{conjecture}
Broucke's orbit in the 2DF setting is linearly stable for all $m_1$ in an open interval containing $[0.01, 0.125]$.
\end{conjecture}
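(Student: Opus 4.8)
The plan is to exploit the reduction already in place: by the Remark following \eqref{Kform} together with the symplectic-$W$ criterion (eigenvalues of $W$ lie on the unit circle iff the eigenvalues of $K$ are real and in $[-1,1]$), linear stability of the 2DF orbit is equivalent to the single scalar entry $e = k_{44}$ of $K$ satisfying $e \in (-1,1)$, the companion eigenvalue being the trivial $-1$ in the $(1,1)$ position supplied by Theorem \ref{ev1}. Strict inequality $|e|<1$ simultaneously dispatches semisimplicity on the nontrivial part, since it forces the associated multipliers of $W$ to be a simple complex-conjugate pair strictly inside the circle and away from $\pm 1$. Thus the whole conjecture collapses to the \emph{scalar} bound $|e(m_1)| < 1$ on the compact interval $[0.01,0.125]$, after which an open neighborhood follows from smooth dependence of the quarter-period variational flow on $m_1$ (which is unproblematic on $[0.01,0.125]$, as this interval is bounded away from both singular limits $m_1 \to 0^+$ and $m_1 \to \tfrac{3}{2}^-$). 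The essential difficulty is already visible in the data: the computed $e$ lie in $[0.998,1.002]$, so the quantity to be certified, $1-e$, is of order $10^{-3}$ and the raw numerics cannot even fix its sign.

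I would pursue two complementary routes. The \emph{first} is a perturbative analysis anchored at the singular limit $m_1 \to 0^+$, where the central body carries all the mass and stays at the origin, the two mass-$m_1$ bodies degenerate into test particles, and $Q_4(0)\to 0$, so that the 2DF orbit limits onto an explicitly integrable Kepler-type motion whose variational equation can be solved in closed form. Using \eqref{Kentries} along this limiting orbit I would compute $e(0) = \lim_{m_1\to 0^+} e(m_1)$, expecting to recover $e(0)=1$ in agreement with Figure~\ref{stabplot2zoom}, and then extract the leading correction $e(m_1) = 1 - c\,m_1^{\beta} + o(m_1^{\beta})$ by differentiating the quarter-period variational flow in $m_1$ at the limit. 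Showing $c>0$ would give $e(m_1)<1$ on a one-sided neighborhood of $0$, which, spliced with the existing Theorem covering an open interval around $[0.125,1.295]$, closes the range.

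The \emph{second}, and likely more decisive, route is a computer-assisted proof by validated numerics: enclose the solution of the 2DF linearized variational equations over a quarter period with a rigorous interval integrator, form a guaranteed enclosure of $e$ via \eqref{Kentries}, and verify that this enclosure lies in $(-1,1)$ uniformly over $[0.01,0.125]$ by subdividing the parameter range and propagating interval bounds in $m_1$. Because $\mathcal{A}$ is invariant (Section~\ref{aExist}), the integration may be run in the reduced coordinates $(Q_1,Q_4,P_1,P_4)$, which lowers the dimension of the variational system and the accumulated error.

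The main obstacle, for either route, is the razor-thin margin compounded with the degeneracy of the quarter-period flow. Each quarter period traverses a regularized binary collision, passing through the chart where $Q_1^2+Q_2^2\to 0$; obtaining validated enclosures (second route) or uniform asymptotic control (first route) across this chart while certifying a deviation of order $10^{-3}$ from the boundary will demand either very high working precision or a change of variables that isolates the singular factor $Q_1^2+Q_2^2$ before integration. Worse, the limit $m_1\to 0^+$ is itself singular — the orbit flattens onto the collinear $x$-axis and the test-particle reduction suppresses the very mechanism that would drive $e$ off the value $1$ — so the sign of the leading coefficient $c$ hinges on a delicate cancellation in the second variation, and pinning down that sign is the step I expect to be the crux of the argument.
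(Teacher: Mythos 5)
First, a point of calibration: the statement you are addressing is labelled a \emph{conjecture} in the paper, and the paper offers no proof of it. Its only support is the observation that the computed values of $e$ lie in $[0.998,1.002]$ on this mass range, while the computed values of the entry $k_{11}$ (provably equal to $-1$ by Theorem \ref{ev1}) deviate from the exact value by as much as roughly $0.1$; that is, the visible numerical error exceeds the distance of $e$ from the stability boundary, which is exactly why the authors downgraded this range from theorem to conjecture. Your reduction of the conjecture to the scalar bound $|e(m_1)|<1$ is precisely the paper's own framework (the Remark following (\ref{Kform}) together with Theorem \ref{ev1} and the symplectic criterion), and your diagnosis of why the raw numerics cannot settle the question is the correct one. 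One small slip: if $e\in(-1,1)$, the associated multipliers of $W$ are $e\pm i\sqrt{1-e^2}$, which lie \emph{on} the unit circle rather than ``strictly inside'' it; what you want is that they form a simple conjugate pair on the circle bounded away from $\pm 1$.

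That said, your proposal is a research program rather than a proof: neither of your two routes is carried out, and each leaves its decisive step open. The perturbative route requires establishing $e(m_1)=1-c\,m_1^{\beta}+o(m_1^{\beta})$ with $c>0$, and you concede that determining the sign of $c$ is the crux; moreover the limit $m_1\to 0^+$ is singular (the orbit flattens onto the $x$-axis and $Q_4(0)\to 0$, per Figure \ref{Q4plot}), so even the existence and form of such an expansion requires an argument, not merely its leading coefficient. The validated-numerics route must certify a margin of order $10^{-3}$ for an interval enclosure propagated through a regularized binary collision, uniformly over a parameter interval, and nothing in the proposal guarantees the accumulated enclosure width can be kept below that threshold. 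So as written the proposal neither proves the conjecture nor reproduces an argument from the paper (there is none to reproduce). It is, however, a correctly framed and sensible plan for upgrading the conjecture to a theorem, and the asymptotic analysis at $m_1\to 0^+$ in particular would be genuinely new content beyond what the paper provides.
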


To compute stability in the 4DF setting, we find the second eigenvalue of the central $2 \times 2$ block of $K$.  From Theorem \ref{ev2}, we know that one eigenvalue of this block is $-1$.  Hence, using the property that the trace of a matrix is equal to the sum of the eigenvalues, we know the second eigenvalue is given by $a + d + 1$, with $a$ and $d$ as in Equation (\ref{Kform}).  This value is plotted in Figures \ref{trfull} and \ref{trfullzoom}. \\

\textit{Remark:}
Computing the second eigenvalue could also be done calculating the negative of the determinant of the central $2 \times 2$ block.  Both were done during the research.  Both yielded the same stability results. \\

\begin{figure}[h]
\begin{center}
\includegraphics[scale=.6]{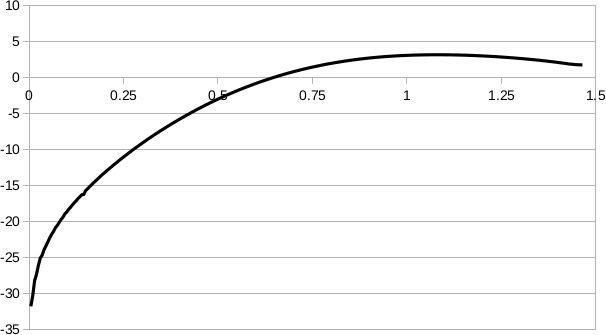}
\caption{A plot of the second non-trivial eigenvalue of $K$.}
\label{trfull} 
\end{center}
\end{figure}

\begin{figure}[h]
\begin{center}
\includegraphics[scale=.6]{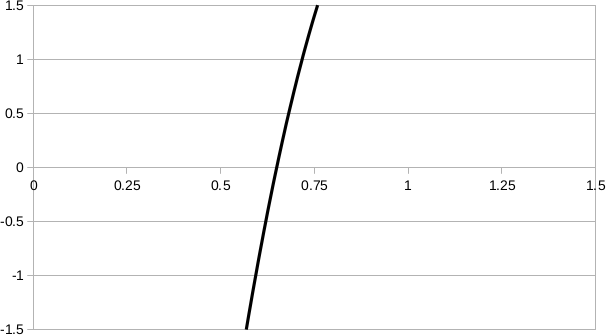}
\caption{A vertically rescaled plot of the value of the second non-trivial eigenvalue of $K$.  For the values of $m_1 \in [0.595, 0.715]$, this second eigenvalue lies within the interval $[-1,1]$.}
\label{trfullzoom} 
\end{center}
\end{figure}

\begin{figure}[h]
\begin{center}
\includegraphics[scale=.6]{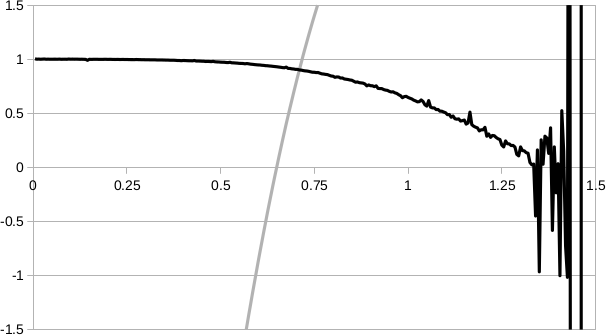}
\caption{Both eigenvalue plots superimposed.  The two graphs cross at roughly $m_1 = 0.71$, where the difference between the two is numerically $0.000429$.}
\label{eandtr} 
\end{center}
\end{figure}

These results give the following:

\begin{theorem}
Broucke's orbit in the 4DF setting is spectrally stable for $m_1$ lying in an open interval containing $[0.595, 0.715]$.
\label{4dfSpec}
\end{theorem}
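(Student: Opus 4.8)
The plan is to read off spectral stability directly from the eigenvalues of the matrix $K$, combining the analytic structure of Section \ref{Kapriori} with the numerical data of Section \ref{Numerics}. Recall that the final lemma of Section \ref{Roberts} asserts that the symplectic matrix $W$ (whose square is similar to the monodromy matrix) has all of its eigenvalues on the complex unit circle precisely when every eigenvalue of $K$ is real and lies in $[-1,1]$. Since Remark (i) following Theorem \ref{ev2} already forces all eigenvalues of $K$ to be real, proving spectral stability on a given set of mass values reduces to checking that the four eigenvalues of $K$ lie in $[-1,1]$ there.

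First I would list the four eigenvalues explicitly from the form (\ref{Kform}) of $K$. Theorem \ref{ev1} places a trivial eigenvalue $-1$ in the $(1,1)$ slot, and Theorem \ref{ev2} places a second eigenvalue $-1$ inside the central $2 \times 2$ block; both of these automatically lie in $[-1,1]$. The third eigenvalue is the entry $e$ in the $(4,4)$ position, which by the decomposition described in Remark (i) after Theorem \ref{kFormTheorem} governs the behavior of the orbit in the invariant subspace $\mathcal{A}$, i.e., the 2DF stability. The fourth eigenvalue is the remaining root of the central block; since that block has trace $a+d$ and already contributes the eigenvalue $-1$, the fourth eigenvalue equals $a+d+1$.

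Next I would invoke the numerical computations. The 2DF stability analysis gives $|e| < 1$ on an interval strictly containing $[0.595, 0.715]$ (indeed throughout $[0.125, 1.295]$), so the third eigenvalue is interior to $[-1,1]$ across the range of interest. The numerically computed values of $a+d+1$, plotted in Figure \ref{trfullzoom}, lie inside $[-1,1]$ exactly for $m_1 \in [0.595, 0.715]$. With all four eigenvalues of $K$ real and contained in $[-1,1]$ for such $m_1$, the lemma delivers spectral stability at each mass value in this range.

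Finally, to promote the closed interval $[0.595, 0.715]$ to an open interval containing it, I would observe that the initial data $\gamma(0)$ and the linearized flow, and therefore the entries of $K$ together with its nontrivial eigenvalues $e$ and $a+d+1$, vary continuously with $m_1$. The numerical data show these eigenvalues strictly inside $(-1,1)$ at $m_1 = 0.595$ and $m_1 = 0.715$, so continuity extends the strict containment to an open neighborhood on either side, yielding spectral stability on an open interval that contains $[0.595, 0.715]$. The principal obstacle is computational rather than analytic: the conclusion depends entirely on the numerically computed value of $a+d+1$ remaining below $1$, and the numerical fragility near $m_1 \to 1.5^-$ noted in Section \ref{Numerics} makes the boundary of the stability window delicate to pin down. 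For the same reason only spectral stability is asserted; establishing full linear stability would additionally require checking semisimplicity of $W$ precisely where the curves $e$ and $a+d+1$ cross (near $m_1 = 0.71$; see Figure \ref{eandtr}), which this argument deliberately sidesteps.
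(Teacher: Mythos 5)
Your proposal matches the paper's own argument: the paper likewise reduces spectral stability to checking that the four real eigenvalues $-1$, $-1$, $e$, and $a+d+1$ of $K$ lie in $[-1,1]$, with the first two supplied by Theorems \ref{ev1} and \ref{ev2} and the last two read off from the numerical computations plotted in Figures \ref{stabplot2full}--\ref{trfullzoom}. Your added continuity remark to pass from the closed to an open interval is a reasonable explicit justification of what the paper leaves implicit.
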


We also note that there are possible values of $m_1$ for which we establish \textbf{only} spectral stability, due to repeated eigenvalues on the unit circle.  For simplicity of explanation, let $f(m)$ denote the lighter curve in Figure \ref{eandtr}, and $g(m)$ denote the darker curve. Also let $I$ be the interval specified by Theorem \ref{4dfSpec}.  Roberts' argument (see \cite{bibRoberts1}) demonstrates that each of the computed eigenvalues of $K$ in $[-1,1]$ correspond to the real part of a square root of an eigenvalue on the complex unit circle.  Accordingly, the value of $m_1 = \mathbf{m_1}$ where $f(\mathbf{m_1}) = g(\mathbf{m_1})$ is a point with duplicated eigenvalues, hence only spectral stability.  As noted above, $\mathbf{m_1} \approx 0.71$.  Similarly, a value of $\mathbf{m_2}$ satisfying $g(\mathbf{m_2}) = \pm 1$ would give a pair of eigenvalues of $1$ in $W^2$.  (If this curve is asymptotic to the line $y = 1$ as conjectured, this may not exist.  Certainly no such point exists for $m_1 \in I$)  For $\mathbf{m_3}$ with $g(\mathbf{m_3}) = 0$, we get $(\pm i)^2 = -1$ eigenvalues of $W^2$.  However, this also does not occur for $m_1 \in I$.  Finally, there is a fourth possibility when $\cos(2f(\mathbf{m_4})) = \cos(2g(\mathbf{m_4}))$, which arises by equating the real parts of $(e^{i\pi \theta_1})^2 = (e^{i\pi \theta_2})^2$ when $\theta_1 \neq \theta_2$.  Within $m_1 \in I$, this occurs for some value between 0.595 and 0.6, and again at the value of $\mathbf{m_1}$ where the two curves meet.  Owing to these results, we conclude with the following: \\  

\begin{theorem}
Apart from the two values of $m_1$ listed above, Broucke's orbit is linearly stable in the same interval given by Theorem \ref{4dfSpec}.
\end{theorem}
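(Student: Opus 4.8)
The plan is to promote the spectral stability already established in Theorem \ref{4dfSpec} to full linear stability by verifying the one remaining requirement in the definition from Section \ref{theoryreview}: that the monodromy matrix be semisimple apart from its trivial eigenvalues. Since the monodromy $X(T)$ is similar to $W^2 = Y_0^{-1}Y(T)$, it suffices to analyze the Jordan structure of $W^2$, and this is controlled entirely by the eigenvalues of $K$. Indeed, by Roberts' lemma each eigenvalue $\lambda \in (-1,1)$ of $K$ produces a conjugate pair $e^{\pm i\phi}$ of eigenvalues of $W$ with $\cos\phi = \lambda$, hence a pair $e^{\pm 2i\phi}$ of characteristic multipliers of $W^2$. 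By \eqref{Kform} together with Theorems \ref{ev1} and \ref{ev2}, the two eigenvalues of $K$ equal to $-1$ correspond to the trivial multipliers, while the two nontrivial eigenvalues---the curves $f(m)$ and $g(m)$ of Figure \ref{eandtr}---produce the four nontrivial multipliers $e^{\pm 2i\phi_1}$ and $e^{\pm 2i\phi_2}$, where $\cos\phi_1 = f(m)$ and $\cos\phi_2 = g(m)$.

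First I would record the standard linear-algebra fact that a matrix all of whose eigenvalues lie on the unit circle is semisimple as soon as those eigenvalues are pairwise distinct. Thus, given the spectral stability of Theorem \ref{4dfSpec}, it is enough to show that for $m_1$ in the interval $I$, outside the two exceptional values, the four nontrivial multipliers are distinct from one another and from the trivial multiplier $+1$. This is exactly where the collision analysis carried out in the discussion preceding the present theorem enters.

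Next I would organize the possible coincidences into the two mechanisms already catalogued there. A conjugate pair collides with itself, $e^{2i\phi_j} = e^{-2i\phi_j}$, precisely when $\phi_j \in \{0, \tfrac{\pi}{2}, \pi\}$, that is, when $f(m)$ or $g(m)$ equals $\pm 1$ or $0$; the figures show that neither curve attains these values on $I$, so no self-collision occurs and no nontrivial multiplier merges with the trivial multiplier $+1$. A cross-collision between the two pairs, $e^{2i\phi_1} = e^{\pm 2i\phi_2}$, occurs only at the values of $m_1$ singled out in that same discussion: the near-tangency $\mathbf{m_1} \approx 0.71$ where $f = g$, and the single point between $0.595$ and $0.6$. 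Away from these two points all four nontrivial multipliers are distinct, so $W^2$---and hence $X(T)$---is semisimple apart from its trivial eigenvalues, and the orbit is linearly stable on the remainder of $I$.

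The main obstacle is that the distinctness claim is numerical rather than closed-form: it rests on the computed curves $f$ and $g$ and on checking throughout $I$ that the self-collision and cross-collision conditions fail off the two exceptional points. The delicate case is the near-tangency at $\mathbf{m_1} \approx 0.71$, where Figure \ref{eandtr} shows $f$ and $g$ agreeing to within $0.000429$; the argument must confirm that this crossing is \emph{isolated}---so that linear stability holds on both sides and fails at most at the single point---rather than the two curves coinciding on a whole subinterval. Apart from this numerical care, the passage from spectral to linear stability is immediate once distinctness of the multipliers is in hand.
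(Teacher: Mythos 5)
Your proposal is correct and follows essentially the same route as the paper: the paper's ``proof'' of this theorem is precisely the preceding discussion cataloguing the eigenvalue collisions (the self-collision cases $f,g = \pm 1$ or $0$, which do not occur on $I$, and the cross-collision case $\cos(2f)=\cos(2g)$, which occurs exactly at the point between $0.595$ and $0.6$ and at $\mathbf{m_1}\approx 0.71$), and linear stability off those two points follows because distinct unit-circle multipliers force semisimplicity. Your added remark that one must check the near-tangency at $\mathbf{m_1}\approx 0.71$ is an isolated crossing rather than a coincidence of the curves on a subinterval is a legitimate piece of numerical care that the paper leaves implicit, but it does not change the argument.
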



\bibliographystyle{plain}
\bibliography{BrouckeNotes}

\end{document}